  \newcommand{\absolute}[1] {\left|{#1}\right|}
  \newcommand{\grad}{\nabla}
\newcommand{\Q}{{\mathbb {Q}}}
\newcommand{\R}{{\mathbb{R}}} 
\newcommand{\Z}{{\mathbb{Z}}}
\newcommand{\C}{{\mathbb{C}}}
\newcommand{\N}{{\mathbb{N}}}
\newcommand\RR{\mathbb R}
\renewcommand\Im{\operatorname{Im}}
\newcommand\vol{\operatorname{vol}}
\newcommand\abs[1]{\left|#1\right|}
\newcommand\inn[1]{\left\langle #1 \right\rangle}
\newcommand\set[1]{\left\{{#1}\right\}}
\newtheorem{thm}{Theorem}[section]
\newtheorem{lem}[thm]{Lemma}
\newtheorem{prop}[thm]{Proposition}
\numberwithin{equation}{section}
\begin{document}

\title[Lattice points on the Heisenberg group]{The lattice point counting problem \\ on the Heisenberg groups}

%    General info
%\subjclass{Primary ????????; Secondary ?????????}

%\date{\today}

%\dedicatory{}

%\keywords{Heisenberg groups, lattice points, Poisson summation formula, spherical functions, Gelfand pair, radial kernel}

%    Information first author

\author{Rahul Garg$^\ast$}
\address{Department of Mathematics, Technion, Haifa, Israel}
\email{rgarg@tx.technion.ac.il}
\thanks{$^\ast$ Supported by ISF Grant and PBC postdoctoral fellowship of CHE, Israel}

%    Information for second author
\author{Amos Nevo$^\dagger$}
\address{Department of Mathematics, Technion, Haifa, Israel}
\email{amosnevo6@gmail.com}
\thanks{$^\dagger$ Supported by ISF Grant}

%    Information for second author
\author{Krystal Taylor$^\sharp$}
\address{Institute of Mathematics and its Applications, Minneapolis, Minnesota}
\email{krystaltaylormath@gmail.com}
\thanks{$^\sharp$ Supported by ISF Grant, Israel and postdoctoral fellowship, U.S.A.}

\begin{abstract}
We consider the  radial and Heisenberg-homogeneous norms on the Heisenberg groups given by 
$N_{\alpha,A}((z,t)) = \left(\abs{z}^\alpha + A \abs{t}^{\alpha/2}\right)^{1/\alpha}$, for 
$\alpha \ge 2$ and $A>0$. This natural family includes the canonical Cygan-Kor\'anyi norm, corresponding to 
$\alpha =4$. We study the lattice points counting problem on the Heisenberg groups, namely establish 
an error estimate for the number of points that the lattice of integral points has in a ball of large 
radius $R$. The exponent we establish for the error in the case $\alpha=2$ is the best possible, 
in all dimensions.   
\end{abstract}

\maketitle
{\small \tableofcontents}

\section{Introduction, notation and statement of results}

\subsection{Euclidean and non-Euclidean lattice point counting problem}
The classical lattice point counting problem in Euclidean space considers a fixed compact convex set 
$B\subset \R^n$ with $0\in B$ an interior point, and  aims to establish an asymptotic of the form 
\begin{equation}\label{eq:def-euclid}
\abs{\Z^n \cap tB} = t^n \vol(B) + O_{\theta^\prime} \left(t^{n-\theta^\prime}\right) 
= \vol(tB) + O_{\kappa^\prime} \left(\left(\vol(tB)\right)^{\kappa^\prime}\right)
\end{equation}
with  $\theta^\prime > 0$ as large as possible (or $\kappa^\prime < 1$ as small as possible) for 
large parameter $t$. We let $\theta$ denote the supremum of $\theta^\prime$ that are admissible in 
(\ref{eq:def-euclid}) (and $\kappa$ the infimum of admissible $\kappa^\prime$).

This problem has a long history, and arises naturally in many applications. Among those, we mention 
just the following two.
\begin{itemize} 
\item The problem of obtaining the asymptotics of the Laplace eigenvalue counting function for the 
torus $\R^n/L$, where $L$ is a lattice, is equivalent to the lattice point counting problem in the 
ellipsoid associated with $L$. Thus here the error estimate in the lattice point counting problem 
amounts to estimating the error in Weyl's law for the corresponding torus.
\item When $B$ is given as the level set of a positive homogeneous form with integral coefficients 
and degree,  for example $x_1^k+\cdots + x_n^k$, the lattice point counting problem is equivalent 
to the fundamental number-theoretic problem of  bounding the error term in the average number of 
representations of positive integers by the form. 
\end{itemize}

Let us note that three of the motivating themes in the development of this subject have been :

\begin{enumerate}
\item Obtaining error estimates which are as sharp as possible in  the case of  Euclidean balls 
$tB^n\subset \R^n$. Here the best possible value of $\theta$ has been obtained for $n \ge 4$, and 
it is $\theta=2$ (namely $\kappa=\frac{n-2}{n}$). The conjectured value for $\R^2$ is $\theta=3/2$ 
(namely $\kappa=1/4$), and for $\R^3$ it is $\theta=2$ (namely $\kappa=1/3$). We refer to \cite{Kr1} 
and \cite{IKKN} for detailed information on the historical development and current best results.
\item Obtaining error estimates for Euclidean dilates of a general smooth compact convex body in 
$\R^n$ whose boundary has everywhere non-vanishing Gaussian curvature. Starting with \cite{Hl}, 
\cite{He}, many different results have been obtained, including, for example, for ellipsoids and 
other bodies of revolution. For more information we refer to \cite{IKKN} and the references therein, 
including \cite{Ch}.
\item Obtaining error estimates  for certain special bodies whose boundary surface contains points 
with vanishing Gaussian curvature. These include the unit balls of $\ell^p$-norms on $\R^n$ and some 
generalizations, and the effect of vanishing curvature on the error estimates have been investigated extensively 
in e.g. \cite{Kr2}, \cite{Kr3}, \cite{Kr4}, \cite{KN1}, \cite{KN2}, \cite{No}, \cite{Pe1}, \cite{Pe2}, 
\cite{R1}, \cite{R2}, \cite{R3} and \cite{R4}. 
\end{enumerate}
% \cite{IKKN} for a  comprehensive survey on the subject. 

It is natural to consider the following considerably more general set-up. 
Let ${\sf G}\subset {\sf G}{\sf L}_n$ denote any connected linear algebraic group defined over $\Q$, 
such that the integral points $\Gamma={\sf G}(\Z)$ form  a lattice subgroup in the group 
$G={\sf G}(\R)\subset GL_n(\R)$ of real points. For interesting  gauge functions, for example a 
natural left-invariant distance $\text{dist}$ on $G$, a natural problem is to establish an asymptotic 
of the form 
\begin{equation}\label{asymptotic}
\abs{\Gamma\cap B_t} = m_G(B_t) + O\left(\left(m_G(B_t)\right)^\kappa\right)
\end{equation}
with $B_t=\set{g\in G \,:\, \text{dist}(g,e)< t}$, and $m_G$ Haar measure on $G$, normalized 
so that the measure of a fundamental domain of $\Gamma$ in $G$ has measure $1$. 

When the group in question is a (non-compact) semi-simple algebraic group, a general solution to the 
problem of estimating $\kappa$ has been developed in \cite{GN}. For simple group of real rank at least 
two, this estimate is the best currently available. However, it should be noted that the best possible 
$\kappa$ has never been established even in a single example, for any left-invariant distance on any 
(non-compact) simple Lie group.

Our purpose in the present paper is to investigate aspects of the lattice point counting problem 
(\ref{asymptotic}) on the Heisenberg groups. Let us first note that unlike the Euclidean case, this 
problem is completely different  from the eigenvalue counting problem for the natural Laplacian on 
compact Heisenberg homogeneous spaces $G/\Gamma$. The latter problem has been studied in detail, see 
\cite{KP} and the references therein. We note further that there has been considerable recent interest 
in geometric group theory  in specific lattice point counting results in the Heisenberg groups. 
These pertain to Carnot-Carath\'eodory distances arising from word metrics on the lattice subgroup, 
and we refer to \cite{BrD}, \cite{DM} and the references therein for more on this topic. These 
counting problems are completely different from those we will consider in the present paper. 

The lattice point counting problem on the Heisenberg groups that we will consider is that of 
counting in balls defined by natural radial Heisenberg-homogeneous gauge functions, and as far as 
the authors are aware, no prior results have been established regarding this problem. Let us now 
turn to describe our set-up, notation and results.

\subsection{The Heisenberg group} 
The Heisenberg group, denoted $\sf{H}_d,$ has several equivalent descriptions which we will use below. 
One is given by 
$$ {\sf H}_d=\R^d\times \R^d\times \R=\set{(x,y,t) \,:\, x,y\in \R^d, t\in \R}$$
with multiplication given by 
$$(x,y,t)(x',y',t')=(x+x',y+y', t+t'+\inn{x,y'}),$$ 
$\inn{x,y'}$ being the standard inner product on $\R^d$.\\
An equivalent formulation is given by the isomorphic group 
$$ {\sf H}_d=\C^d\times \R=\set{(z,t) \,:\, x+iy=z\in \C^d, t\in \R}$$ 
with multiplication 
$$(z,t)\cdot (z^\prime,t^\prime)=(z+z^\prime, \, t+t^\prime+2\Im z\cdot \bar{z}^\prime)$$
so that the multiplication can be also be described by the symplectic form :
$$(x,y,t)(x',y',t')=\left(x+x', \, y+y', \, t+t'+2(\inn{x,y'}-\inn{x',y})\right).$$ 

The \textbf{Heisenberg dilations} are defined by $(z,t)\mapsto \phi_a(z,t)=(az,a^2t)$ for any given 
$a\in \R_+$, and constitute a group of automorphisms of the Heisenberg group. Another important 
group of automorphisms of the Heisenberg group is the unitary group $U_d(\C)$, whose action is 
given by $(z,t) \mapsto (Uz,t)$, for $U \in U_d(\C)$. 

%\begin{enumerate}
%
%\item radiality, non-isotropic dilations, homogeneity 
%\item  properties  of the $N_{\alpha,\beta}$ norms (Kor\'anyi being $N_{2,4}$), volume growth  
%\item Characterizing norms that are both radial and homogeneous 
%\item Lattices in the Heisenberg group
%\end{enumerate}
%
%
%
%
%\begin{enumerate}
%\item comparison with Herz's (Hlawka) Euclidean results, role of non-vanishing Gaussian curvature  
%\item Other Euclidean lattice point results (ellipsoids, bodies of revolution.....)
%\item (Euclidean) convexity of the $N_\alpha$-norm balls
%\item Proof of vanishing Gaussian curvature for $N_\alpha$-norm balls
%\item The basic reduction : non-iostropic convolution and the two sided estimate for the number of lattice points
%\end{enumerate}
%

\subsubsection{Heisenberg-homogeneous radial norms}

The action of the dilation group gives rise to a natural notion of homogeneity on the Heisenberg group, 
where $f: {\sf H}_d\to \C$ is homogeneous of degree $\mu$ if it satisfies $f(az,a^2t)=a^\mu f(z,t)$. 
The action of $U_d(\C)$ gives rise to a natural notion of radiality on the Heisenberg group, where a function 
$f: {\sf H}_d\to \C$ is radial if it satisfies $f(Uz,t)=f(z,t)$, namely is invariant under 
the $U_d(\C)$-action. 

%One of the most natural family of gauge functions on the Heisenberg group, the family we shall call 
%Heisenberg norms, is given by, for $\alpha > 0$ :
%$$N_{\alpha}((z,t))= \left(\abs{z}^\alpha + \abs{t}^{\alpha/2}\right)^{1/\alpha}.$$
%Clearly, Heisenberg norms $N_{\alpha}$ are radial and homogeneous of degree 1. 
%
%Let us note that a somewhat larger family of gauges satisfying the same two properties is given by 
%$$N_{\alpha, A}((z,t))=\left(\abs{z}^\alpha + A \abs{t}^{\alpha/2}\right)^{1/\alpha}$$
%with $A> 0$. %The  Kor\'anyi norm has $a=1/16$, $b=1$}
%
One of the most natural family of gauge functions on the Heisenberg group, the family we shall call 
Heisenberg norms, is given by, for $\alpha,A > 0$ :
$$N_{\alpha,A}((z,t))=\left(\abs{z}^\alpha + A \abs{t}^{\alpha/2}\right)^{1/\alpha}.$$
Clearly, Heisenberg norms $N_{\alpha,A}$ are radial and homogeneous of degree 1. For 
notational simplicity, at some places we will focus our attention on the family $N_\alpha = N_{\alpha,1}$, 
but the family $N_{\alpha,A}$ satisfies the same properties, and below we will point out briefly at 
the appropriate places that the arguments we use need only non-essential modifications. However, 
naturally the estimates involved are locally but not globally uniform in $A$. 

An interesting special case arises when $\alpha=2$, a gauge that was considered by a number of 
authors. For the balls associated with the Heisenberg norm $N_2$, we shall obtain the best possible 
result on the error estimate in the lattice point counting problem.

\subsubsection{The Cygan-Kor\'anyi Heisenberg-norm}
The most natural gauge function on the Heisenberg group arises when $\alpha=4$, namely 
$N_{4,A}((z,t)) = \left(\abs{z}^4 + A \abs{t}^2 \right)^{1/4}$.
This norm was considered by Cygan \cite{Cy}, \cite{Cy1} and Kor\'anyi \cite{Kor}, and is often referred to as 
the Kor\'anyi norm, or Cygan-Kor\'anyi norm (see \cite{PP}), which is the designation we shall adopt. 
To gauge its full significance the reader should consult \cite[\S 2, \S 3]{CDKR}, \cite{PP} and 
\cite{St}, and here let us just mention the following. View the Heisenberg group as part of the 
Iwasawa $AN$ group of the simple Lie group $SU(d,1)$, and embed it as a subset of the boundary of 
the complex hyperbolic space in the usual way. The Cygan-Kor\'anyi norm $N_{4,A}$, for suitable $A$ 
depending on the structure constant associated with the symplectic form defining the bracket in 
the Lie algebra, can then be characterized uniquely in geometric terms, and appears in the explicit 
expression defining the following canonical geometric objects :
\begin{enumerate}
\item the conformal inversion on $N\setminus\set{e}$, 
\item the Radon-Nikodym derivative of the conformal inversion acting on $N$,
\item the Busemann cocycle and the density of the Patterson-Sullivan measure on the boundary 
of complex hyperbolic space, 
\item the cross ratio on $N$. 
\end{enumerate}

In addition to the above, the Cygan-Kor\'anyi norm has attracted a lot of attention in the context 
of the harmonic analysis on the Heisenberg group. For example, it appears in the expression defining 
the fundamental solution of a natural sublaplacian on the Heisenberg group and in other natural kernels, 
see \cite{St} and \cite{Co} for an exposition and \cite{FL} and the references therein for recent results.

\subsection{Statement of the main result}
Let us recall that the Haar measure on ${\sf H}_d$ can be identified with the Lebesgue measure on 
$\R^{2d+1}$. We denote by $\abs{B}$ the Haar measure (= Euclidean volume) of a set $B\subset {\sf H}_d$, 
and recall that it scales under dilations according to the homogeneous dimension, not the Euclidean 
dimension. In particular, if 
$B_R^{\alpha,A} = \{(z,t) \in {\sf H}_d : \abs{z}^\alpha + A\abs{t}^{\alpha/2} \leq R^\alpha \}$ 
is the $N_{\alpha,A}$-ball of radius $R$ in ${\sf H}_d$, which is also the Heisenberg dilate of the unit 
ball namely $\phi_R(B_1^{\alpha,A})$, then $\abs{B_R^{\alpha,A}} = R^{2d+2} \abs{B_1^{\alpha,A}}$. % is the volume of $B^\alpha_R$, 

\noindent {\bf Notation} : We let $\#(A)$ denote the size of a finite set $A\subset \mathbb{R}^{2d+1}$. 
We write $f \lesssim g$ if there exists a constant C such that $f \leq C g$, and $f \sim g$ if 
$f \lesssim g, \, g \lesssim f$.

Keeping the notation introduced above, we now state our main result on upper bounds on the error  
in the lattice point counting problem.
\begin{thm}\label{thm:main}
The error term in the lattice point counting problem in $B_R^{\alpha,A}$
% = \{(z,t) \in \mathbb{H}_d : |z|^\alpha + |t|^{\alpha / 2} \leq R^\alpha \}$
is estimated as follows. 
\begin{enumerate}
\item For $d\geq 1$ and $\alpha = 2$, 
\begin{equation}\label{maind1} 
\abs{\# \left(\mathbb{Z}^{2d+1} \cap B_R^{2, A} \right) - \abs{B_R^{2,A}}} \lesssim R^{2d}\,.
\end{equation}
Furthermore, this result is the best possible.
\item For $d =1$ and $\alpha > 2$,
\begin{equation}\label{maind2} 
\abs{\# \left( \mathbb{Z}^{3} \cap B_R^{\alpha,A} \right) - \abs{B_R^{\alpha,A}}} \lesssim R^{2 + \max\{0, \, \delta(\alpha)\}} \log(R) \, .
\end{equation}
Here $\delta(\alpha)= \frac{1-\frac{4}{\alpha}}{\frac32-\frac{2}{\alpha}},$ so that in particular 
$\max\{0, \, \delta(\alpha)\} = 0$ for $2 < \alpha \le 4$.   
\item For  $d \geq 2$ and $\alpha > 0$, 
\begin{align}\label{maind3} 
\abs{\# \left(\mathbb{Z}^{2d+1} \cap B_R^{\alpha,A} \right) - \abs{B_R^{\alpha,A}}} \lesssim
\begin{cases}
R^{4} (\log(R))^{2/3} &; \mbox{ for } d = 2\\
R^{2d} &; \mbox{ for } d \geq 3 \, .
\end{cases}
\end{align}
\end{enumerate}
\end{thm}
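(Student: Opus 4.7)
The plan is to combine smoothing with Poisson summation on $\ZZ^{2d+1}$, reducing the problem to estimating the Fourier transform $\hat\chi_R$ of $\chi_R:=\chi_{B_R^{\alpha,A}}$ at lattice points. Two structural features of the Heisenberg ball are decisive: its $U_d(\CC)$-radial symmetry, and the Heisenberg-dilation identity $B_R^{\alpha,A}=\phi_R(B_1^{\alpha,A})$, which yields the anisotropic Fourier scaling
\[
\hat\chi_R(\xi_z,\xi_t) \;=\; R^{2d+2}\,\hat\chi_1^{\alpha,A}\bigl(R\xi_z,\, R^2\xi_t\bigr).
\]
The asymmetry (degree one in $\xi_z$, degree two in $\xi_t$) is precisely what separates the Heisenberg problem from its Euclidean counterpart and forces the Poisson truncation to be anisotropic. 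The first step is standard: fix a smooth non-negative bump $\psi$ with $\int\psi=1$, put $\psi_\epsilon=\epsilon^{-(2d+1)}\psi(\cdot/\epsilon)$, and produce sandwiches $\chi_R^-\leq \chi_R\leq \chi_R^+$ with $\int(\chi_R^+-\chi_R^-)\lesssim \epsilon R^{2d+1}$, the measure of an $\epsilon$-shell about $\partial B_R^{\alpha,A}$. Poisson summation then gives
\[
\sum_{n\in\ZZ^{2d+1}}\chi_R^\pm(n) \;=\; \vol(B_R^{\alpha,A}) \;+\; O(\epsilon R^{2d+1}) \;+\; \sum_{m\neq 0}\hat\chi_R(m)\,\hat\psi(\epsilon m),
\]
and the Schwartz decay of $\hat\psi$ truncates the last sum at $|m|\lesssim 1/\epsilon$.

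The technical core is the decay estimate for $\hat\chi_1^{\alpha,A}(\eta_z,\eta_t)$. By $U_d(\CC)$-invariance it depends only on $|\eta_z|$ and $|\eta_t|$, and slicing in $t$ gives
\[
\hat\chi_1^{\alpha,A}(\eta_z,\eta_t) \;=\; \int_{|t|\leq A^{-2/\alpha}} r(t)^{2d}\,\mathcal J_d\bigl(2\pi r(t)|\eta_z|\bigr)\,e^{-2\pi i t\eta_t}\,dt,
\]
where $r(t)=(1-A|t|^{\alpha/2})^{1/\alpha}$ and $\mathcal J_d(s)=J_d(s)/s^d$ is the normalized Fourier transform of the Euclidean ball in $\RR^{2d}$. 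Using the Bessel asymptotic $J_d(s)\sim s^{-1/2}\cos(s-c_d)$ together with van der Corput/stationary phase on the resulting one-variable oscillatory integral yields a two-parameter family of bounds, to be analyzed in three regimes: the non-stationary region (handled by integration by parts), the generic stationary region (whose decay reflects the Gaussian curvature of $\partial B_1^{\alpha,A}$), and the equatorial and polar regimes where this curvature degenerates. Substituting back via $\hat\chi_R(m)=R^{2d+2}\hat\chi_1(Rm_z,R^2 m_t)$, summing anisotropically over $0<|m|\lesssim 1/\epsilon$, and optimizing $\epsilon$ produces the exponents appearing in \eqref{maind1}--\eqref{maind3}.

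A few special cases admit shortcuts. When $\alpha=2$ and $d\geq 3$, the bicone structure of $B_R^{2,A}$ lets one bypass Fourier analysis almost entirely: slicing at integer $t$ reduces to Walfisz's classical Euclidean-ball error bound $E_{2d}(r)\lesssim r^{2d-2}$, and summing over $|t|\lesssim R^2$ immediately yields the $R^{2d}$ bound; the cases $d=1,2$ still require the Fourier cancellation described above to save the necessary logarithmic or polynomial factors. Sharpness in part (1) would be proved by exhibiting a sequence $R_j\to\infty$ along which a full codimension-one stratum of $\partial B_{R_j}^{2,A}$ aligns with integer hyperplanes in $\RR^{2d+1}$ and forces an error $\gtrsim R_j^{2d}$.

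The principal obstacle is the Fourier estimate itself: the bound on $\hat\chi_1^{\alpha,A}(\eta_z,\eta_t)$ must be uniform in the anisotropic ratio $|\eta_t|/|\eta_z|^2$ and sharp enough to track both the equatorial ridge of $\partial B_1^{\alpha,A}$ (present for $\alpha=2$) and the polar degeneracy at $|t|=A^{-2/\alpha}$ where $r(t)\to 0$. These two loci of vanishing or degenerate curvature dictate the optimal exponent in each case and are the source of the split into parts (1)--(3) of the theorem.
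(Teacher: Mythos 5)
Your outline reproduces the paper's architecture at the level of a table of contents (mollify, Poisson summation, anisotropic Fourier scaling, Bessel asymptotics plus van der Corput, optimize $\epsilon$; slicing and a jump argument on the side), but all of the content of the theorem lives in steps you do not carry out, and two of the steps you do specify would go wrong. First, the exponents in \eqref{maind1}--\eqref{maind3}, and in particular $\delta(\alpha)=\frac{1-4/\alpha}{3/2-2/\alpha}$, come out of \emph{specific} decay bounds for $\widehat{\chi_{B_1^{\alpha,A}}}$ in three regimes (the $s$-axis, the hyperplane $s=0$, and $\abs{w}\ge 1,\abs{s}\ge A^{2/\alpha}$) followed by an optimization in $\epsilon$; you name the regimes but state no bounds, and the uniformity in the ratio $\abs{\eta_t}/\abs{\eta_z}$ that you correctly identify as the principal obstacle is exactly what the paper resolves (by locating the unique $r_0$ where control passes from $\abs{\phi'}\ge 1/2$ to $\abs{\phi''}\ge 1/2$ for the phase $r+\frac{s}{\abs{w}}(1-r^\alpha)^{2/\alpha}$) and what your proposal leaves open. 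Second, your mollifier is dilated isotropically, $\psi_\epsilon=\epsilon^{-(2d+1)}\psi(\cdot/\epsilon)$, whereas the paper's key device is the Heisenberg dilate $\rho_\epsilon=\epsilon^{-(2d+2)}\rho(z/\epsilon,t/\epsilon^2)$: it is supported in $B_\epsilon^{\alpha,A}$, so the sandwich follows from Euclidean subadditivity of $N_{\alpha,A}$, and the Poisson tail is cut off anisotropically at $\abs{m_z}\lesssim\epsilon^{-1}$, $\abs{m_t}\lesssim\epsilon^{-2}$, matching the scaling $\widehat{\chi_{B_R}}(m)=R^{2d+2}\widehat{\chi_{B_1}}(Rm_z,R^2m_t)$. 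With your isotropic cutoff the later instruction to ``sum anisotropically over $\abs{m}\lesssim 1/\epsilon$'' is inconsistent, and the bookkeeping that produces the stated exponents would have to be redone from scratch.

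Part (3) is also misattributed. The Fourier route yields at best $R^{2d}\log R$ for $2<\alpha\le 4$ and degrades for large $\alpha$; the bound $R^4(\log R)^{2/3}$ for $d=2$ and $R^{2d}$ for $d\ge 3$, valid for \emph{all} $\alpha>0$, comes from slicing at integer $t$ and importing the known Euclidean sphere-problem exponents $\theta_1(2d)=2d-2$ (with $\theta_2(4)=2/3$), together with an Euler--Maclaurin comparison of $\sum_{k''}A_{2d}(R^\alpha-\abs{k''}^{\alpha/2})^{2d/\alpha}$ with $\abs{B_R^{\alpha,A}}$ -- a main-term step you omit entirely. Your restriction of the slicing shortcut to $\alpha=2$ via a ``bicone structure'' is unnecessary (every slice is a Euclidean ball for every $\alpha$) and your claim that $d=2$ ``still requires the Fourier cancellation'' is the opposite of what happens. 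Finally, for sharpness in part (1) the paper's argument is an explicit jump: between $R=\sqrt{M}$ and $R=\sqrt{M+\frac12}$ no new integer points enter $B_R^{2,A}$ while the volume increases by $\sim M^d=R^{2d}$; your ``alignment of a codimension-one stratum'' is the right intuition but is not yet an argument.
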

We remark, that in the case of ${\sf H}_1$, when $\alpha$ is sufficiently large it is possible to 
improve the error estimate $R^{2+\delta(\alpha)}$ given above. We will explain this further in 
\S \ref{sec:proof-main}.  

\subsubsection{On the method of proof}

Let us first remark that since the Heisenberg group ${\sf H}_d$ is parametrized by Euclidean space 
$\R^{2d+1}$, and the lattice $\Gamma$ of integral points is parametrized by $\Z^{2d+1}$, the problem 
we consider can also be viewed as counting elements in the Euclidean lattice $\Z^{2d+1}$ contained 
in the family of increasing bodies $B_R^{\alpha,A} \subset R^{2d+1}$ as $R\to \infty$. 
Nevertheless, no Euclidean counting result of lattice points in dilates of convex bodies is directly 
relevant to our problem, since the Heisenberg dilations used to expand the given body $B_1^{\alpha,A}$ 
are materially different than the Euclidean dilations.

Our method of bounding the error term in the lattice point counting problem in 
$B_R^{\alpha,A} \subset {\sf H}_d$ uses a blend of Euclidean and Heisenberg notions. In \S 2, we will 
estimate the {\it Euclidean Fourier transform} of the characteristic function of $B_1^{\alpha,A}$. 
In \S 3, we will dominate the lattice point count in $B_R^{\alpha,A}$ from the above and below by the 
{\it Euclidean convolution} $\chi_{B_R^{\alpha,A}} \ast \rho_\epsilon$, where $\rho_\epsilon$ is a 
bump function. A key new point here is that $\rho_\epsilon$ is defined using  {\it Heisenberg dilations}, 
rather than the Euclidean ones. 
We will then apply the {\it Euclidean Poisson summation} formula to 
$\chi_{B_R^{\alpha,A}}\ast \rho_\epsilon$, and estimate the resulting product expression using the 
spectral decay estimates established in \S 2. We will  argue separately 
in several  different regions, whose structure reflects the fact that $\rho_\epsilon$ was defined 
using {\it Heisenberg dilations}. In \S 4, we will compare our upper bound on the error term with 
the lower bound that can be obtained from slicing - namely by viewing our lattice point problem 
in each hyperplane $(z,t)$ with $t$ fixed separately. In the hyperplane we apply the known results 
regarding the classical Euclidean sum-of-squares problem in $\R^{2d}$. 
%given by $\abs{z} \le \left(R^\alpha-A\abs{t}^{\alpha/2}\right)^{1/\alpha}$ as $\abs{t}$ ranges over 
%$(0, A^{-2/\alpha}R^2)$.  

Let us remark that estimating the decay of the Euclidean Fourier transform of $\chi_{B_1^{\alpha,A}}$ 
is crucial to our argument, but we have not found in the literature any result that applies directly 
to this problem. Indeed, as we shall see, it turns out that the bodies  $B_1^{\alpha,A}, \alpha \geq 2,$ are in fact 
Euclidean-convex bodies of revolution, but their surfaces have points of vanishing curvature, and 
this renders the elaborate results on the Fourier transform decay for bodies with surfaces of 
non-vanishing Gaussian curvature irrelevant. In fact, the surface of each $B_1^{\alpha,A}, \alpha > 2,$ has the 
property that the curvature vanishes to maximal order at some points, namely the Hessian is zero 
at those points. Our spectral decay estimates are direct and are based on the observation that the 
condition of radiality reduces the problem to estimating oscillatory integrals in the variables 
$(\abs{z},\abs{t})$. We analyze the latter using ideas developed in  estimating oscillatory integrals 
on plane curves initiated in \cite{SW}, using van-der-Corput classical results. The estimates we 
obtain exceed, in our particular situation, those that can be deduced from the current standard 
estimates for the decay of the Fourier transform of a general Euclidean convex body whose surface 
has points of zero Gaussian curvature to maximal order. 

The closest point of comparison to our spectral decay result for $B_R^{\alpha,A}$ would seem to be 
spectral decay results for special Euclidean bodies such as $\ell^p$-balls and other related bodies, 
which were considered in e.g. \cite{R1}, \cite{R2}, \cite{Kr2}, \cite{Kr3}, \cite{KN1}, \cite{KN2}. 
As we shall show in \S 6, our method, when applied to the Euclidean lattice point counting problem 
in some of these bodies, actually yields the same (main) error estimate obtained for some of them 
in the references cited.  We should note however that the analysis in these references is much more 
elaborate and produces a secondary summand in the asymptotic expansion.

\section{Harmonic analysis of Heisenberg norm balls} \label{Heisenberg-spectral}

\subsection{Properties of Heisenberg norm balls}
Let us begin by establishing three properties of the norm balls $B_1^{\alpha,A} = 
\{(z,t) \in {\sf H}_d : N_{\alpha,A}((z,t)) \leq 1 \}$ that are the subject of our discussion. 
Essential use will be made of the third property later on. 
%For simplicity, we write the details 
%for the case $A=1$ and will see that the arguments go through for general $A$.

\subsubsection{Euclidean convexity of the $N_{\alpha,A}$-norm balls}

\begin{prop} \label{convexity}
In every Heisenberg group ${\sf H}_d$, $d \ge 1$, the unit balls $B_1^{\alpha,A}$ are 
Euclidean-convex if and only if $\alpha \geq 2$. 
\end{prop}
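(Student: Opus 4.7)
The plan is to prove the two implications separately using elementary convex analysis. For the direction $\alpha \ge 2 \Rightarrow$ convex, I would observe that the defining function $F(z,t) := |z|^\alpha + A|t|^{\alpha/2}$ is itself convex on $\R^{2d+1}$, from which convexity of the sublevel set $B_1^{\alpha,A}=\{F\le 1\}$ is immediate. Indeed, $z \mapsto |z|^\alpha$ is the composition of the Euclidean norm with the convex nondecreasing map $r \mapsto r^\alpha$ on $[0,\infty)$ (valid for $\alpha \ge 1$), hence is convex on $\R^{2d}$; the map $t \mapsto A|t|^{\alpha/2}$ is convex on $\R$ because $\alpha/2 \ge 1$; and a sum of convex functions is convex.

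For the converse direction, when $\alpha < 2$, I would exhibit a 2-dimensional slice of $B_1^{\alpha,A}$ that fails to be convex. Intersecting with the plane $\{(z_1, 0, \ldots, 0, t) : z_1, t \in \R\}$ yields the planar region $R_\alpha := \{(u,t) \in \R^2 : |u|^\alpha + A|t|^{\alpha/2} \le 1\}$; since affine slices of convex sets are convex, it suffices to show $R_\alpha$ is not convex. I would test the two boundary points $p_1 = (1, 0)$ and $p_2 = (\rho(\epsilon), \epsilon)$, where $\rho(t) := (1 - A t^{\alpha/2})^{1/\alpha}$ and $\epsilon > 0$ is small, by evaluating the gauge $|u|^\alpha + A|t|^{\alpha/2}$ at their midpoint.

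Using the Taylor expansion $\rho(\epsilon) = 1 - \tfrac{A}{\alpha}\epsilon^{\alpha/2} + O(\epsilon^\alpha)$, and hence $\bigl(\tfrac{1+\rho(\epsilon)}{2}\bigr)^\alpha = 1 - \tfrac{A}{2}\epsilon^{\alpha/2} + O(\epsilon^\alpha)$, together with $A(\epsilon/2)^{\alpha/2} = A\cdot 2^{-\alpha/2}\epsilon^{\alpha/2}$, a direct computation shows that the gauge at the midpoint equals $1 + A\epsilon^{\alpha/2}\bigl(2^{-\alpha/2} - \tfrac{1}{2}\bigr) + O(\epsilon^\alpha)$. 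For $\alpha < 2$ one has $2^{-\alpha/2} > \tfrac{1}{2}$, so the midpoint lies strictly outside $R_\alpha$ for small $\epsilon$, proving non-convexity; the threshold $\alpha = 2$ is precisely where this leading coefficient vanishes.

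The main (minor) obstacle is the singular behaviour of $\rho$ at $t = 0$: for $\alpha \le 2$ the profile has a downward corner or vertical tangent there, which is exactly what forces the $\epsilon^{\alpha/2}$ expansion with exponent below $1$ in the non-convexity argument, and also why the direct convex-function argument breaks down once $\alpha < 2$. The forward direction conveniently bypasses any need to analyse this corner when $\alpha \ge 2$, while the Taylor expansion in the converse direction pins down the transition precisely.
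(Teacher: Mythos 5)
Your proof is correct and follows essentially the same route as the paper: for $\alpha\ge 2$ you show the gauge $|z|^\alpha+A|t|^{\alpha/2}$ is convex (the paper writes out the same fact via the explicit $\lambda$-combination), and for $\alpha<2$ you pass to a planar slice exactly as the paper does. The only difference is that you actually carry out the midpoint computation showing the slice is non-convex, a step the paper dismisses with ``as can easily be verified directly''; your expansion is correct and cleanly identifies $\alpha=2$ as the threshold.
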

\begin{proof}
Let $\alpha \ge 2$,  fix $(z,t), (w,s) \in B_1^{\alpha,A}$ and $0 < \lambda < 1$, and write 
\begin{align*}
\left(N_{\alpha,A}(\lambda(z,t) + (1-\lambda)(w,s))\right)^{\alpha} &= \left(N_{\alpha,A}((\lambda z + (1-\lambda) w, \lambda t + (1-\lambda)s))\right)^{\alpha}\\
&= \abs{\lambda z + (1-\lambda)w}^\alpha + A\abs{\lambda t + (1-\lambda)s}^{\alpha/2}\\
&\leq \left(\lambda \abs{z} + (1-\lambda) \abs{w} \right)^\alpha + A\left(\lambda \abs{t} + 
(1-\lambda)\abs{s} \right)^{\alpha/2}.
\end{align*}
%\begin{align*}
%N_{\alpha,A}(\lambda(z,t) + (1-\lambda)(w,s)) &= N_{\alpha,A}((\lambda z + (1-\lambda) w, \lambda t + (1-\lambda)s))\\
%&= \left( \abs{\lambda z + (1-\lambda)w}^\alpha + A\abs{\lambda t + (1-\lambda)s}^{\alpha/2}\right)^{1/\alpha}\\
%&\leq \left(\left(\lambda \abs{z} + (1-\lambda) \abs{w} \right)^\alpha + A\left(\lambda \abs{t} + 
%(1-\lambda)\abs{s} \right)^{\alpha/2} \right)^{1/\alpha}.
%\end{align*}

Using the convexity of $x\mapsto x^\alpha $ and $x\mapsto x^{\alpha/2}$ for $\alpha \ge 2$, the 
latter expression is bounded by 
$$\lambda \abs{z}^\alpha + (1-\lambda) \abs{w}^\alpha +
A\lambda \abs{t}^{\alpha/2} + A(1-\lambda) \abs{s}^{\alpha/2}$$
which equals 
$$\lambda \left(N_{\alpha,A}((z,t))\right)^\alpha + (1-\lambda) \left(N_{\alpha,A}((w,s))\right)^\alpha .$$
Thus if $N_{\alpha,A}((z,t)) \le 1$ and $N_{\alpha,A}((w,s)) \le 1$ then also $N_{\alpha,A}(\lambda(z,t) + (1-\lambda)(w,s))\le 1$, 
and the unit $N_{\alpha,A}$-norm ball is Euclidean-convex.
%and by the triangle inequality for the $\ell_\alpha$-norm on $\RR^2$, applied to the vectors $(|z|, |t|^{1/2})$ and $(|w|, |s|^{1/2})$, we obtain the bound 
%$$\lambda N_\alpha(z,t)+(1-\lambda)N_\alpha(w,s)$$ 
%Therefore, to prove the convexity of the unit balls in $N_\alpha$-norm for $\alpha \geq 2$,
%it is sufficient to verify that $D_\alpha = \{(a,b) \in \R^2 : a,b \geq 0 \textup{ and } a^\alpha
%+ b^{\alpha/2} \le1 \}$ are convex subsets of $\R^2$. This can easily be checked and we leave the
%simple details as an easy exercise. More interestingly, 

To see the non-convexity of $B_1^{\alpha,A}$ in the
case $\alpha <2$, consider the set $D_{\alpha,A} = \{(a,b) \in \R^2 : a,b \geq 0 \textup{ and } a^\alpha
+ A \, b^{\alpha/2} \le1 \}$. It can be isometrically  identified with the intersection
of the unit ball $B_1^{\alpha,A}$ with the set $U = \{(z,t) : z=a(1,0,...,0) \textup{ and }a,t \geq 0\}$, 
and $U$ is clearly a Euclidean-convex set. If the unit ball $B_1^{\alpha,A}$ was Euclidean-convex as well, 
then $D_{\alpha,A}$ being the intersection of two Euclidean-convex sets would also be a Euclidean-convex 
subset of $\RR^2$. But, for $\alpha < 2$ this is not the case as can easily be verified directly. 
Thus the unit balls for the $N_{\alpha,A}$-norm are convex if and only if $\alpha \geq 2$.
\end{proof}

\subsubsection{Vanishing of principal and Gaussian curvatures for $N_{\alpha,A}$-norm balls}

Considering the curvature of the surface bounding the body $B_1^{\alpha,A}$ for $d\ge 1$ and $\alpha >2$, 
we note the following. 

{\it The north and south poles}. The Gaussian curvature of the surface of $B_1^{\alpha,A} \subset \R^{2d+1}$ vanish at 
both the points of intersection of $B_1^{\alpha,A}$ and the $t$-axis, namely 
the north and south poles. In fact all of the $2d$ principal curvatures vanish at these two points, 
so that the Hessian of the defining equation at these points is the zero matrix. In view of the symmetry of 
the surface, it is enough to compute the principal curvatures at the point $t=-1.$ 
The surface near the point $t = -1$ (after translation) is given by
$$t= \varphi(X) = \varphi(X_1, \ldots, X_{2d}) = A^{-2/\alpha} \left(1-\left(1- \abs{X}^\alpha \right)^{2/\alpha}\right),$$
with $\varphi(\vec{0}) = 0 = \grad\varphi(\vec{0})$. Differentiating directly, the Hessian matrix
$H = \left(\frac{\partial^2\varphi}{\partial X_i \partial X_j}\right)$
obtained at the origin is the zero matrix, so that the principal curvatures at the origin all vanish, 
being the eigenvalues of the Hessian. 
%
%
%First we translate the surface by vector $(\overrightarrow{0},1)$ so that 
%the surface lies in the halfspace $t \geq 0$ and the point in question translates to the origin with 
%$t=0$ being the tangent plane. Now, the surface near the origin is given by
%$$t= \varphi (X) = 1 - \left( 1- \abs{X}^\alpha \right)^{2/\alpha},$$
%where $X=(X_1,...,X_{2d}) \in \R^{2d}.$ Notice that $\varphi(0) = 0 = \grad \varphi(0).$ 
%Differentiating directly, the Hessian matrix
%$H = \left(\frac{\partial^2\varphi}{\partial X_i \partial X_j}\right)_{1 \leq i,j \leq 2d}$
%obtained at the origin is the zero matrix, so that the principal curvatures at the origin all vanish, 
%being the eigenvalues of the Hessian. 
% (see e.g. \cite{St} Chapter VIII, Section 3 for Fourier transforms of surface measures). 
%This completes our claim.
 %and $\rho_\epsilon (z,t) = \epsilon^{-(2d+2)} \rho (\frac{1}{\epsilon}z, \frac{1}{\epsilon^2}t)$,

{\it The equator.} On the equator of the surface, namely the intersection of $B_1^{\alpha,A}$ with the 
hyperplane $t=0$, the Gaussian curvature vanishes as well, but here only one principal curvature 
vanishes. Indeed, isolating the first variable $X_1$,  the surface near the point $X_1 = 1$ 
(after translation) is given by
$$X_1 = \psi(X_2, \ldots, X_{2d}, t) = 1-\left(\left(1-A\abs{t}^{\alpha/2}\right)^{2/\alpha} - 
\left(X_2^2+ \cdots +X_{2d}^2\right)\right)^{1/2},$$
with $\psi(\vec{0}) = 0 = \grad\psi(\vec{0})$. Differentiating directly, the Hessian matrix at origin is in fact diagonal, 
and only one diagonal entry is 0, namely $ \psi_{tt}$. All other diagonal entries, i.e. $\psi_{X_j X_j}$ 
are non-zero. Thus, the equator forms a curve where the Gaussian curvature vanishes to the first order.

\subsubsection{Euclidean subadditivity of the $N_{\alpha,A}$-norm balls}

Our analysis below will involve Euclidean convolution with a special family $\rho_\epsilon$ of bump 
functions, defined as follows.  Let us fix a bump function 
$\rho: \mathbb{R}^{2d}\times \mathbb{R}\rightarrow \mathbb{R}$, which is a smooth non-negative 
function with support contained in the unit ball $B_1^{\alpha,A}$, such that $\rho(0) > 0$ and 
$\int_{B_1^{\alpha,A}} \rho(z,t) \,dz \, dt =1$. We then consider the family of functions defined by the 
normalized Heisenberg dilates of $\rho$, namely 
$$\rho_{\epsilon}(z,t) = \frac{1}{\epsilon^{2d+2}} \rho\left(\frac{z}{\epsilon}, \frac{t}{\epsilon^2}\right) 
= \frac{1}{\epsilon^{2d+2}} \rho \circ \phi_{1/\epsilon}(z,t).$$
Clearly, $\rho_\epsilon$ is supported in the ball $B^{\alpha,A}_\epsilon$, and of course, 
$\int_{\R^{2d+1}} \rho_\epsilon (z,t) \, dz \, dt =1$ for all $\epsilon > 0$. 

We can now state the following :
\begin{prop}\label{triangle} For every $d \ge 1$,  and $\alpha \ge 1$, 
\begin{enumerate}
\item the $N_{\alpha,A}$-norms are subadditive on $\R^{2d+1}$ with respect to Euclidean addition, namely :
$$N_{\alpha,A}((z,t) + (w,s)) \le N_{\alpha,A}((z,t)) + N_{\alpha,A}((w,s)).$$
\item The balls $B_R^{\alpha,A}$ satisfy the following two-sided inequalitiy with respect to Euclidean convolution : 
$\chi_{B_{R - \epsilon}^{\alpha,A}} * \rho_\epsilon \leq \chi_{B_R^{\alpha,A}} \leq \chi_{B_{R + \epsilon}^{\alpha,A}} * \rho_\epsilon.$
\end{enumerate}
\end{prop}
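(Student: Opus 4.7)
The two parts hang together: once the subadditivity of (1) is in hand, (2) follows by an elementary pointwise analysis of the convolution integrand. Accordingly, the substantive step is (1).

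To prove (1), I would peel off the two coordinates separately and then reduce to a standard vector-space inequality. Specifically, the Euclidean triangle inequality on $\R^d$ gives $\abs{z+w} \le \abs{z}+\abs{w}$, and the fact that $x \mapsto x^{1/2}$ is subadditive on $[0,\infty)$ (combined with $\abs{t+s} \le \abs{t}+\abs{s}$) gives $\abs{t+s}^{1/2} \le \abs{t}^{1/2}+\abs{s}^{1/2}$. Since $x \mapsto x^\alpha$ is monotone on $[0,\infty)$, substituting these bounds into the expression for $N_{\alpha,A}((z,t)+(w,s))^\alpha$ reduces the desired subadditivity to showing
\begin{equation*}
\left((a+b)^\alpha + (c+d)^\alpha\right)^{1/\alpha} \le (a^\alpha+c^\alpha)^{1/\alpha} + (b^\alpha+d^\alpha)^{1/\alpha}
\end{equation*}
for all nonnegative reals $a,b,c,d$, where $a=\abs{z}$, $b=\abs{w}$, $c=A^{1/\alpha}\abs{t}^{1/2}$, $d=A^{1/\alpha}\abs{s}^{1/2}$. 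But this is exactly Minkowski's inequality for the $\ell^\alpha$-norm on $\R^2$ applied to the vectors $(a,c)$ and $(b,d)$, which is valid for all $\alpha \ge 1$.

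For (2), observe first that $N_{\alpha,A}$ is even in each variable, so $N_{\alpha,A}((-w,-s)) = N_{\alpha,A}((w,s))$; combined with (1) this yields the two-sided triangle inequality $\abs{N_{\alpha,A}((z,t)) - N_{\alpha,A}((z-w,t-s))} \le N_{\alpha,A}((w,s))$. Recall that $\rho_\epsilon \ge 0$, $\supp \rho_\epsilon \subset B_\epsilon^{\alpha,A}$, and $\int \rho_\epsilon = 1$. For the upper bound, if $(z,t) \in B_R^{\alpha,A}$ then for every $(w,s) \in \supp \rho_\epsilon$ subadditivity gives $N_{\alpha,A}((z-w,t-s)) \le R+\epsilon$, so $\chi_{B_{R+\epsilon}^{\alpha,A}}(z-w,t-s) = 1$ throughout the support of $\rho_\epsilon$ and the convolution equals $\int \rho_\epsilon = 1$. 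For the lower bound, whenever the integrand is nonzero at $(w,s)$ one has $(w,s) \in B_\epsilon^{\alpha,A}$ and $(z-w,t-s) \in B_{R-\epsilon}^{\alpha,A}$, whence $N_{\alpha,A}((z,t)) \le R$; thus the convolution vanishes outside $B_R^{\alpha,A}$ and never exceeds $1$.

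There is no real technical obstacle; the only step requiring a little insight is spotting the Minkowski reduction. It is worth noting that subadditivity holds under the weaker hypothesis $\alpha \ge 1$, whereas Euclidean convexity of $B_1^{\alpha,A}$ required $\alpha \ge 2$ in Proposition \ref{convexity}; this is not a contradiction, because $N_{\alpha,A}$ is not Euclidean-homogeneous, so subadditivity alone does not force convexity of the unit ball.
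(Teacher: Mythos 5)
Your proof is correct and follows essentially the same route as the paper's: part (1) via the triangle inequality in each factor, subadditivity of the square root, and Minkowski's inequality in $\ell^\alpha(\R^2)$ applied to $\left(\abs{z}, A^{1/\alpha}\abs{t}^{1/2}\right)$ and $\left(\abs{w}, A^{1/\alpha}\abs{s}^{1/2}\right)$; part (2) by the same pointwise analysis of the convolution using $\supp\rho_\epsilon \subset B_\epsilon^{\alpha,A}$ and $\int\rho_\epsilon = 1$. (Only a cosmetic slip: $z$ lives in $\C^d \cong \R^{2d}$, not $\R^d$.)
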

\begin{proof}
To see that the Heisenberg norms are subadditive with respect to Euclidean addition, fix any 
$(z,t), (w,s) \in \R^{2d+1}={\sf H}_d$. Then
\begin{eqnarray*}
N_{\alpha,A}((z,t) + (w,s)) &=& \left(\abs{z+w}^\alpha + A\abs{t+s}^{\alpha/2}\right)^{1/\alpha}\\
&\leq& \left( (\abs{z} + \abs{w})^\alpha + A\left((\abs{t}+\abs{s})^{1/2}\right)^\alpha \right)^{1/\alpha}\\
&\leq& \left((\abs{z} + \abs{w})^\alpha + \left(A^{1/\alpha}\abs{t}^{1/2} + A^{1/\alpha}\abs{s}^{1/2}\right)^\alpha\right)^{1/\alpha}\\
&\leq& \left(\abs{z}^\alpha + A\abs{t}^{\alpha/2}\right)^{1/\alpha} + \left(\abs{w}^\alpha + A\abs{s}^{\alpha/2}\right)^{1/\alpha}\\
&=& N_{\alpha,A}((z,t)) + N_{\alpha,A}((w,s)) \, ,
\end{eqnarray*}
where we have used $(a+b)^{1/2}\le a^{1/2}+b^{1/2}$ for $a, b \ge 0$, and following that we have 
used the triangle inequality in $l_\alpha(\mathbb{R}^2)$ applied to the vectors $\left(\abs{z}, A^{1/\alpha} \abs{t}^{1/2}\right)$ 
and $\left(\abs{w}, A^{1/\alpha} \abs{s}^{1/2}\right).$

To prove the second inequality of part (2), note first that if $(z,t) \notin B^{\alpha,A}_R$, 
then the inequality holds trivially. Taking $(z,t) \in B^{\alpha,A}_R$, we have 
\begin{eqnarray*}
\chi_{B_{R + \epsilon}^{\alpha,A}} \ast \rho_\epsilon (z,t) &=& \int_{B_{R + \epsilon}^{\alpha,A}} \rho_\epsilon (z-w, t-s)\, dw \, ds \\
&=& \int_{(z,t)-B_{R+\epsilon}^{\alpha,A}} \rho_\epsilon(w,s) \, dw \, ds \geq \int_{B_\epsilon^{\alpha,A}} \rho_\epsilon (w,s) \, dw \, ds \, ,
\end{eqnarray*}
where the last step holds true because of the non-negativity of $\rho_\epsilon$ and the Euclidean 
subadditivity of the $N_{\alpha,A}$-norms, which implies that for $(z,t)\in B_R^{\alpha,A}$, the set 
$(z,t)-B_{R + \epsilon}^{\alpha,A}$ contains $B_\epsilon^{\alpha,A}$. Thus,
\begin{eqnarray*}
\chi_{B_{R + \epsilon}^{\alpha,A}} \ast \rho_\epsilon (z,t) \geq \int_{B_\epsilon^{\alpha,A}} \rho_\epsilon (w,s) \, dw \, ds
= \int_{B_1^{\alpha,A}} \rho (w,s) \, dw \, ds = 1 = \chi_{B_R^{\alpha,A}}(z,t).
\end{eqnarray*}
Finally, note that for any $(z,t) \in B^{\alpha,A}_R$, the first inequality follows from the fact that 
$\chi_{B_{R - \epsilon}^{\alpha,A}} \ast \rho_\epsilon \leq \int \rho_\epsilon = 1$, whereas for 
$(z,t) \notin B^{\alpha,A}_R$, 
\begin{eqnarray*}
\chi_{B_{R - \epsilon}^{\alpha,A}} \ast \rho_\epsilon (z,t) &=& \int_{(z,t)-B_{R-\epsilon}^{\alpha,A}} \rho_\epsilon(w,s) \, dw \, ds 
\leq \int_{\R^{2d+1} \setminus B_\epsilon^{\alpha,A}} \rho_\epsilon (w,s) \, dw \, ds = 0.
\end{eqnarray*}
This completes the proof of Proposition \ref{triangle}.
\end{proof}

%\begin{rem} \label{norm-properties}
%Notice that the statements of Proposition \ref{convexity} and Proposition \ref{triangle} are valid 
%also for $B_1^{\alpha,A}$ and $N_{\alpha,A}$ respectively, and it is clear that the same proofs give 
%the conclusion in these cases as well. It should also be noticed that the nature of the Gaussian 
%curvature (order of zeroes) of the surface of each of $B_1^{\alpha,A}$ is exactly similar to that 
%for $B_1^{\alpha}$.
%\end{rem}

\subsection{Decay of Fourier transforms of Heisenberg norm balls}
We continue to view the Heisenberg-homogeneous norm balls
$$B_1^{\alpha,A}=\{(w,s) \in \mathbb{R}^{2d}\times \mathbb{R} : \abs{w}^{\alpha} + A \abs{s}^{\alpha/2} \le 1\}\,,$$
as subsets of $\R^{2d+1}$, and in the present section we will give estimates on the rate of decay of 
the Euclidean Fourier transform of $\chi_{B_1^{\alpha,A}}$. These decay estimates will be applied in the 
next section to the problem of finding the error term in the lattice point counting problem in the 
sets $B_R^{\alpha,A} = \phi_R(B_1^{\alpha,A})$, where $\phi_R(z,t)=(Rz,R^2 t) $ is the Heisenberg dilation.

The unitary characters of $\R^{2d+1}$ are parametrized by $\set{(w,s)\,:\, w\in \R^{2d}, s\in \R}$.
Let $\widehat{f}$ denote the Euclidean Fourier transform of a function $f$, in the form
$$\widehat{f}(w,s)=\int_{\R^{2d+1}}e^{-2\pi i (\inn{z,w}+ts)}f(z,t)\, dz \, dt \,.$$
We first record here the following useful identity :
\begin{equation} \label{FT-A-1}
\widehat{\chi_{B_1^{\alpha,A}}} (w,s) = A^{-2/\alpha} \, \widehat{\chi_{B_1^\alpha}} \left(w, A^{-2/\alpha}s\right)
\end{equation}
which relates the Euclidean Fourier transform of $\chi_{B_1^{\alpha,A}}$ with that of $\chi_{B_1^{\alpha}}$. 
We divide the parameter space to three subsets and will argue separately in each. The domains of our
consideration are given by 
\begin{enumerate}
\item  $w= \vec{0}$, namely the $s$-axis, 
\item  the hyperplane $s=0$, 
\item  the set $\abs{w}\ge 1$ and  $\abs{s}\ge A^{2/\alpha}$.
\end{enumerate}

As we shall see below, this decomposition is natural in the context of the Heisenberg group and is 
dictated by the decomposition of $\R^{2d+1}$ to eigenspaces of the Heisenberg dilation. Furthermore, 
since after applying the Euclidean Poisson summation formula we will be interested in summing the 
Fourier transform over the points in the integer (dual) lattice, estimates on the sets listed above 
will suffice.

\noindent {\bf Notation} : We write $B_1^{\alpha}$ and $B_R^{\alpha}$ to denote $B_1^{\alpha,1}$ and $B_R^{\alpha,1}$ respectively.
%As mentioned earlier, we do the estimation with $A=1$. However, it will become clear that the 
%estimates in the general case for any $A$ follow from these estimates as we explain it later in Remark \ref{alpha-A}.

\subsubsection{Decay of the Fourier transform along $s$-axis} 

\begin{lem} \label{taxis}
For $\alpha > 0, \abs{s} \geq A^{2/\alpha},$
$$\abs{\widehat{\chi_{B_1^{\alpha,A}}}(\vec{0},s)} \lesssim \abs{s}^{-\left(1+\min \left\{\frac{2d}{\alpha}
, \, \frac{\alpha}{2}\right\}\right)} \, .$$
However, for a positive integer $\alpha \in 4 \N$ we have a better estimate
$$\abs{\widehat{\chi_{B_1^{\alpha,A}}}(\vec{0},s)} \lesssim \abs{s}^{-\left(1+\frac{2d}{\alpha}\right)} \, .$$
\end{lem}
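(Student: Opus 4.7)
The plan is to reduce the $(2d+1)$-dimensional Fourier integral to a one-dimensional one by integrating out the $z$ variable, and then extract the decay rate by isolating the (in general non-smooth) singularities of the resulting amplitude. By identity \eqref{FT-A-1} it suffices to prove the bound for $A = 1$ with $\abs{s} \ge 1$. Since $\chi_{B_1^{\alpha}}(z,t)$ depends on $z$ only through $\abs{z}$, Fubini and polar coordinates in $\R^{2d}$ give
\begin{equation*}
\widehat{\chi_{B_1^{\alpha}}}(\vec 0, s) = c_d \int_{-1}^{1} \bigl(1 - \abs{t}^{\alpha/2}\bigr)^{2d/\alpha}\, e^{-2\pi i t s}\, dt,
\end{equation*}
where $c_d$ is the Euclidean volume of the unit ball in $\R^{2d}$. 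Thus the entire task reduces to estimating
\begin{equation*}
I(\lambda) = \int_{-1}^{1} g(u)\, e^{-2\pi i \lambda u}\, du, \qquad g(u) = \bigl(1-\abs{u}^{\alpha/2}\bigr)^{2d/\alpha},
\end{equation*}
for $\abs{\lambda} \ge 1$.

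I would then decompose $g$ via a smooth partition of unity $1 = \psi_0 + \psi_{+} + \psi_{-}$, with $\psi_0$ supported in $\abs{u} \le 3/4$ and $\psi_{\pm}$ localized in small neighborhoods of the endpoints $u = \pm 1$. Each endpoint piece $g\psi_{\pm}$ has the form $(1 \mp u)^{2d/\alpha}$ times a $C^\infty$ function with small compact support, which is a Fourier integral with a Riemann--Liouville type fractional endpoint singularity; Erd\'elyi's one-dimensional oscillatory lemma then yields decay of order $\abs{\lambda}^{-(1+2d/\alpha)}$. The central piece $g\psi_0$ is smooth away from the origin, and near $u = 0$ the binomial expansion gives $g(u) = 1 - (2d/\alpha)\abs{u}^{\alpha/2} + O(\abs{u}^\alpha)$, so one may write $g(u)\psi_0(u) = P(u) + \abs{u}^{\alpha/2} k(u)$ with $P \in C^\infty_c(\R)$ absorbing the smooth polynomial part and $k \in C^\infty_c(\R)$ smooth at the origin. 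The Fourier transform of $P$ is rapidly decaying, while that of $\abs{u}^{\alpha/2} k(u)$ inherits the tempered-distribution scaling of $\abs{u}^{\alpha/2}$ and produces $O\bigl(\abs{\lambda}^{-(1+\alpha/2)}\bigr)$. Combining the three contributions yields the stated bound with exponent $1 + \min\{2d/\alpha, \alpha/2\}$.

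For the improved estimate when $\alpha \in 4\N$, observe that $\alpha/2$ is then a positive even integer, so $\abs{u}^{\alpha/2} = u^{\alpha/2}$ is a polynomial, $g$ is $C^\infty$ on $(-1,1)$, and the central contribution is of rapid decay. Only the endpoint analysis survives, yielding $O\bigl(\abs{\lambda}^{-(1+2d/\alpha)}\bigr)$. The principal non-routine ingredient is the endpoint asymptotic, since the exponent $2d/\alpha$ is in general fractional so ordinary repeated integration by parts does not apply; modulo the appeal to the appropriate fractional-order oscillatory integral lemma in the spirit of \cite{SW}, identifying the leading interior singularity of $g$ via the binomial expansion is elementary, and the rest is bookkeeping.
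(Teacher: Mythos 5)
Your reduction to the one-dimensional integral $\int_{-1}^1(1-\abs{u}^{\alpha/2})^{2d/\alpha}e^{-2\pi i\lambda u}\,du$ is exactly the paper's first step, and your identification of the two competing singularities --- exponent $2d/\alpha$ at $u=\pm1$ and exponent $\alpha/2$ at $u=0$ --- is precisely what produces the paper's exponent $1+\min\{2d/\alpha,\alpha/2\}$. Where you differ is in how the singularities are isolated: you use a smooth partition of unity and treat each singular point by a local asymptotic (Erd\'elyi at the endpoints, homogeneous-distribution scaling at the origin), whereas the paper works on $[0,1]$ globally, integrating by parts $q$ times (with $q$ the integer for which $-1<\min\{\alpha/2-q,\,2d/\alpha-q\}\le 0$) so as to transfer the derivatives onto an amplitude with algebraic singularities $t^{\beta-q}$ and $(1-t)^{\gamma-q}$ at the two endpoints, and then invokes the one-endpoint oscillatory estimates \eqref{oscillatory-1}--\eqref{oscillatory-2}. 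The two routes are equivalent in substance; yours is arguably cleaner conceptually, while the paper's version keeps everything inside the single elementary lemma of \S\ref{vdC} that it reuses again in the proof of Lemma \ref{xyaxis}.

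One inaccuracy to repair in the central piece: you write $g\psi_0=P+\abs{u}^{\alpha/2}k$ with $k\in C_c^\infty$, but $k(u)=\psi_0(u)\bigl((1-\abs{u}^{\alpha/2})^{2d/\alpha}-1\bigr)\abs{u}^{-\alpha/2}$ behaves like $c_1+c_2\abs{u}^{\alpha/2}+\cdots$ near the origin and is therefore \emph{not} smooth there unless $\alpha/2$ is an even integer. This does not break the argument: subtract finitely many terms of the binomial expansion so that the remainder is $C^N$ for $N$ large, and note that each subtracted term $\abs{u}^{j\alpha/2}\chi(u)$ has Fourier transform $O(\abs{\lambda}^{-(1+j\alpha/2)})$, the worst case being $j=1$. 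But as written the claim that $k$ is smooth is false and should be replaced by this finite expansion. With that correction the proof is complete and yields both stated bounds, including the improvement for $\alpha\in4\N$, where $\abs{u}^{\alpha/2}$ is a genuine polynomial and the interior contribution is rapidly decaying, exactly as in the paper's Case I.
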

%{\bf To do : State for $\abs{w}\le 1$.}
\begin{proof}
In view of identity \eqref{FT-A-1}, it suffices to establish the lemma for $A=1$. Now,
\begin{eqnarray*}
\widehat{\chi_{B_1^\alpha}}(\vec{0},s) &=& \int_{B_1^\alpha} e^{-2\pi its} \, dz \, dt \\
&=& \int_{-1}^1 e^{-2\pi i t s} \left( \int_{\abs{z} \leq \left(1-\abs{t}^{\alpha/2} \right)^{1/\alpha}}\, dz \right)\, dt \\
&=& C_d \int_{-1}^1 e^{-2\pi i t s} \left(1-\abs{t}^{\alpha/2} \right)^{2d/\alpha} \, dt \, .
\end{eqnarray*}
Before beginning the proof, let us first note that for $\alpha = 4$, the integral above is, up to a 
constant, the Fourier transform of the Euclidean unit ball in $\mathbb{R}^{d+1}$ evaluated at 
$(\vec{0},s)$; in this case, the decay is well-known and agrees with the claim.
%\begin{enumerate}
%\item Observe that for $\alpha = 4$, the integral above is, up to a constant, the Fourier transform 
%of the Euclidean unit ball in $\mathbb{R}^{d+1}$ evaluated at $(0,s)$; in this case, the decay is 
%well-known and agrees with the claim. 
%\item The same computation in case of the family $N_{\alpha,A}$ reduces to the one just displayed 
%with the only difference being a rescaling in $s$. Hence the arguments in the present section are 
%valid for this case as well.
%\end{enumerate} 
Proceeding with the analysis of a general $\alpha \ge 2$ and $B_1^\alpha$, we consider the following 
two cases :

\vspace{1em} % adds some space

\underline{Case I:} When $\alpha = 4k$ for some positive integer $k$, then we write 
$\frac{2d}{\alpha} = \frac{d}{2k} = m+\mu$ for $m \in \N, \, 0<\mu\leq 1$ and apply 
integration by parts $m+1$-times to get
\begin{eqnarray*}
\widehat{\chi_{B_1^\alpha}}(\vec{0},s) &=& \frac{C_{k, d}}{s^{m+1}} \int_{-1}^1 e^{-2\pi i t s}
\left(1-t^{2k} \right)^{\mu-1} P(t) \, dt \\
&=& \frac{C_{k, d}}{s^{m+1}} \int_0^1 e^{-2\pi i t s} (1-t)^{\mu-1}
\left\{\left( \sum_{j=0}^{2k-1} t^j \right)^{\mu-1} P(t)\right\} \, dt \\
&&+ \frac{C_{k, d}}{s^{m+1}} \int_{-1}^0 e^{-2\pi i t s} (1+t)^{\mu-1}
\left\{\left( \sum_{j=0}^{2k-1} (-t)^j \right)^{\mu-1} P(t)\right\} \, dt
\end{eqnarray*}
for some polynomial $P$. 
We now use the standard estimates of oscillatory integrals stated below, which apply to integrals having singularity at one of the end 
points. In case $\mu = 1$, the integrals have no singularity and we integrate by parts once more.
%
%When $\mu=1$ we can integrate by parts once more
%and otherwise, we use standard estimates of oscillatory integrals with singularities at the endpoints, which are stated below. 
Thus, we have shown that the above integrals decay at least at the rate of $\abs{s}^{-\mu}$ 
which proves the claimed estimate in the case of an integer $\alpha \in 4 \N$.

\vspace{1em} % adds some space

\underline{Case II:} In general, we consider the following integral for $\beta, \gamma > 0$ : 
\begin{eqnarray*}
I_{\beta, \gamma}(s) = \int_0^1 \cos(ts) \left(1-t^\beta \right)^\gamma \, dt \, .
\end{eqnarray*}
Let $q \geq 1$ be the integer for which $-1 < \min\{\beta-q, \gamma-q\} \leq 0 \, .$ 
Applying integration by parts $q$-times we get
\begin{align*}
I_{\beta, \gamma}(s) =& \frac{1}{s^q} \int_0^1 F(ts)t^{\beta -q} \left(1-t^\beta \right)^{\gamma-q}
\left\{\sum_{j=0}^{q-1} C_{q,j} (t^\beta)^j \right\} \, dt \\
=& \frac{1}{s^q} \int_{\frac{1}{2}}^1 F(ts) \left(1-t\right)^{\gamma-q} \left\{\left(\frac{1-t^\beta}{1-t}\right)^{\gamma-q}
\sum_{j=1}^{q} C_{q,j} t^{j\beta -q} \right\} \, dt\\
&+ \frac{1}{s^q} \sum_{j=1}^{q} C_{q,j} \int_0^{\frac{1}{2}} F(ts) t^{j\beta -q}
\left\{ \left(1-t^\beta \right)^{\gamma-q} \right\} \, dt.
\end{align*}
Here $F(s) = \cos(s)$ if $q$ is even, otherwise $F(s) = \sin(s)$. 
%
%We now use the standard estimates of 
%oscillatory integrals stated below,  which apply to integrals having singularity at one of the end 
%points. In the integrals having no singularity, if there are any, we integrate by parts once.
%Together these estimates imply that 
%
An argument similar to that of Case I then implies that the above integrals in the expression of $I_{\beta, \gamma}(s)$ 
decay at least at the rate of $\abs{s}^{-(1+\min\{\beta-q, \, \gamma-q\})}$. Substituting the appropriate 
values of $\beta$ and $\gamma$ completes the proof of Lemma \ref{taxis}.
\end{proof}

\subsubsection{Estimates of oscillatory integrals with singularities at one endpoint.} \label{vdC}
It is well known that one can use the method of staionary phase to prove the following estimates 
(for full  details regarding even more refined asymptotic estimates, see e.g. \cite{Er}, Sec 2.8).
For any $0 < \lambda \le 1$, and $-\infty < a <b < \infty$, we have
\begin{equation} \label{oscillatory-1}
\abs{\int_a^b e^{-i s t} (t-a)^{\lambda-1}\, dt} \leq C_{\lambda} \abs{s}^{-\lambda}. 
\end{equation}
Here the constant $C_{\lambda}$ does not depend on $a$ and $b$.
From this one can easily deduce that for any differentiable function $g$ on $[a,b]$ such that 
$g^\prime $ is integrable on $[a,b]$
\begin{equation} \label{oscillatory-2}
\abs{\int_a^b e^{-i s t} (t-a)^{\lambda-1} g(t) \, dt} \leq
C_{\lambda} \left(\abs{g(b)} + \int_a^b \abs{g^\prime(t)} \, dt \right) \abs{s}^{-\lambda}.
\end{equation}
In fact, the integral in \eqref{oscillatory-2} can be written as $\int_a^b F^\prime(t) g(t) \, dt$ with
$$F(t) = \int_a^t e^{-i s r} (r-a)^{\lambda-1} \, dr \, ; \, \hspace{0.2in} a<t<b.$$
The estimate of \eqref{oscillatory-2} then follows by doing integration by parts and using 
\eqref{oscillatory-1}.

\subsubsection{Decay of the Fourier transform on the hyperplane $s$=0} \label{s=0} 
We begin by re-writing $\widehat{\chi_{B_1^\alpha}}(w,s)$ as follows, and we use this expression 
in the next two lemmas. 
\begin{eqnarray*}
\widehat{\chi_{B_1^\alpha}}(w,s) &=& \int_{B_1^\alpha} e^{-2\pi i (\inn{z,w}+ts)} \, dz \, dt \\
&=& \int_{\abs{z} \leq 1} 
e^{-2\pi i \inn{z,w}} \left( \int_{\abs{t} \leq \left(1-\abs{z}^{\alpha} \right)^{2/\alpha}} e^{-2\pi i ts} \, dt \right) \, dz \\
&=& \int_{0}^{1}  \left( \int_{S^{2d-1}} e^{-2\pi i r\theta \cdot w}  d\theta \right) \left( \int_{\abs{t} 
\leq \left(1-r^\alpha\right)^{2/\alpha}} e^{-2\pi i t s} \, dt \right) r^{2d-1} \, dr \\
&=& \int_{0}^{1}  \left(\int_{S^{2d-1}} e^{-2\pi i r\theta \cdot w}  d\theta \right) 
\frac{2 \sin\left(2\pi s(1-r^\alpha)^{2/\alpha}\right)}{2\pi s} r^{2d-1} \, dr \\
&=& c_d \int_{0}^{1}  \widehat{\sigma}(2\pi rw) \frac{2 \sin\left(2\pi s(1-r^\alpha)^{2/\alpha}\right)}{2\pi s} r^{2d-1} \, dr \, ,
\end{eqnarray*}
where $\sigma$ denotes the surface measure on the $(2d-1)-$dimensional sphere. \\
Notice that by the rotation invariance of $\sigma$, and hence of $\widehat{\sigma}$, 
\begin{align*}
\widehat{\sigma}(rw) &= \int_{S^{2d-1}} e^{-i r \abs{w} \theta \cdot e_1} \, d\theta\\
&=c_d \int_{0}^{\pi} e^{-i r \abs{w} cos(\phi)} (sin(\phi))^{2d-2} \, d\phi\\
&= c_d\int_{-1}^{1} e^{-i r \abs{w} u} (1-u^2)^{\frac{2d-3}{2}} du.
\end{align*}
By definition of the Bessel function $J_{d-1}$, the last integral equals to a constant multiple 
of $(2\pi r \abs{w})^{-(d-1)} J_{d-1}(r \abs{w})$.

We now state the estimate along the hyperplane.
%\begin{rem}\label{alpha-A}
%The foregoing computation when applied to the family $N_{\alpha,A}$ produces exactly the same expression 
%with the only difference being a rescaling of $s$. Hence the estimates of Lemma \ref{xyaxis} below are 
%valid for this case as well, and since the proof of Lemma \ref{nonzero} below also amounts to an estimate 
%of this expression, so are the estimates of Lemma \ref{nonzero}. We conclude that the Fourier transform 
%of the unit ball of $N_{\alpha,A}$ satisfy the same decay estimates as that of $B_1^\alpha$, in the 
%three regions $\set{\abs{w}=0,\abs{s}\ge c_A }$, $\set{s=0, \abs{w}\ge 1}$ and 
%$\set{\abs{w}\ge 1, \abs{s}\ge c_A}$, with $c_A > 0$ depending on $A$. 
%\end{rem}

\begin{lem} \label{xyaxis}
For any $\alpha > 2$ we have for $\abs{w} \geq 1$,
\begin{align*}
\abs{\widehat{\chi_{B_1^{\alpha,A}}}(w,0)} \lesssim
\begin{cases}
\abs{w}^{-2d} &; \mbox{ if } \frac{2}{\alpha} > d-\frac{1}{2} \\
\abs{w}^{-\left(d+\frac{1}{2}+\frac{2}{\alpha}\right)} &; \mbox{ if } \frac{2}{\alpha} \leq d-\frac{1}{2} \, .
\end{cases}
\end{align*}
Moreover, for $\alpha =2$, one has the following identity
\begin{eqnarray*}
\widehat{\chi_{B_1^{2,A}}}(w,0) = C_{d,A} \, \frac{J_{d+1}(2\pi \abs{w})}{\abs{w}^{d+1}} \, .
\end{eqnarray*}
\end{lem}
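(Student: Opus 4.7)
The plan is to reduce everything to $A = 1$ using identity \eqref{FT-A-1}, and then exploit the radial structure of $B_1^\alpha \subset \R^{2d+1}$ via the Bessel representation already established at the start of \S\ref{s=0}, namely
$$\widehat{\chi_{B_1^\alpha}}(w, 0) = C_d \abs{w}^{-(d-1)} \int_0^1 J_{d-1}(2\pi r \abs{w})(1-r^\alpha)^{2/\alpha} r^d \, dr.$$
For $\alpha = 2$ the integrand becomes $J_{d-1}(\lambda r)(1-r^2) r^d$ with $\lambda = 2\pi\abs{w}$, to which I would apply Sonine's first finite integral $\int_0^1 J_\nu(\lambda r) r^{\nu+1}(1-r^2)^\mu dr = 2^\mu \Gamma(\mu+1) \lambda^{-(\mu+1)} J_{\nu+\mu+1}(\lambda)$ with $\nu = d-1$, $\mu = 1$. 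This yields the stated closed form $C_{d,A} J_{d+1}(2\pi\abs{w})/\abs{w}^{d+1}$, the $A$-dependence entering through the $A^{-2/\alpha}$ prefactor in \eqref{FT-A-1}.

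For $\alpha > 2$ I would first perform an integration by parts using the Bessel identity $\frac{d}{dr}[r^d J_d(\lambda r)] = \lambda r^d J_{d-1}(\lambda r)$. The boundary terms vanish (at $r=0$ the factor $r^d J_d(\lambda r)$ vanishes to order $r^{2d}$, at $r=1$ the factor $(1-r^\alpha)^{2/\alpha}$ vanishes), leaving
$$I := \int_0^1 J_{d-1}(\lambda r)(1-r^\alpha)^{2/\alpha} r^d \, dr = \frac{2}{\lambda} \int_0^1 r^{d+\alpha-1} J_d(\lambda r)(1-r^\alpha)^{2/\alpha - 1} \, dr.$$
The point of this IBP is to trade the vanishing factor $(1-r^\alpha)^{2/\alpha}$ for the integrable singularity $(1-r^\alpha)^{2/\alpha-1}$, which is now in a form suitable for the oscillatory integral estimates of \S\ref{vdC}.

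Next I would split the remaining integral at $r_0 = 1/\lambda$. On $[0,r_0]$, the small-argument bound $\abs{J_d(\lambda r)} \lesssim (\lambda r)^d$ gives a contribution of size $O(\lambda^{-d-\alpha})$, which is harmless. On $[r_0,1]$, I would substitute the Bessel asymptotic $J_d(\lambda r) = \sqrt{2/(\pi \lambda r)}\cos(\lambda r - \phi_d) + O((\lambda r)^{-3/2})$, reducing the main term to
$$\sqrt{\tfrac{2}{\pi\lambda}} \int_{r_0}^1 r^{d+\alpha-3/2}(1-r^\alpha)^{2/\alpha - 1}\cos(\lambda r - \phi_d)\, dr.$$
Changing variables $u = 1-r$ and noting that $((1-r^\alpha)/(1-r))^{2/\alpha - 1}$ is smooth near $u = 0$, this fits the hypotheses of \eqref{oscillatory-2} with $\mu = 2/\alpha \in (0,1)$, giving a bound $O(\lambda^{-2/\alpha})$. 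Combined with the $\lambda^{-1/2}$ Bessel prefactor and the $2/\lambda$ from the IBP, this produces $I = O(\lambda^{-(3/2 + 2/\alpha)})$, hence $\widehat{\chi_{B_1^\alpha}}(w,0) = O(\abs{w}^{-(d + 1/2 + 2/\alpha)})$. This is precisely the second case. In the first case ($2/\alpha > d - 1/2$) the inequality $d+1/2+2/\alpha > 2d$ means that the same estimate already implies the weaker statement $\abs{w}^{-2d}$, so no separate argument is needed.

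The main obstacle will be the careful treatment of the endpoint singularity when applying \eqref{oscillatory-2}: one must verify that the smooth cofactor has an integrable derivative on $[0,1-r_0]$ in order to justify the estimate, and one must check that the subleading Bessel remainder $O((\lambda r)^{-3/2})$ does not degrade the bound (it contributes $O(\lambda^{-3/2})$ to the inner integral, which is smaller than $\lambda^{-1/2-2/\alpha}$ since $\alpha > 2$, using $d + \alpha - 5/2 > -1$ for integrability near $r=1$). All the other steps are routine once the IBP has been performed.
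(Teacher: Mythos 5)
Your argument is correct, and for the main case $\alpha>2$ it takes a genuinely different route from the paper. The paper works directly with $\int_0^1 J_{d-1}(\xi r)(1-r^\alpha)^{2/\alpha}r^d\,dr$: it splits at $\delta=\xi^{-1}$, bounds the small-$r$ piece only by $\abs{J_{d-1}}\lesssim 1$, inserts the first \emph{two} terms of the Bessel asymptotic on $[\delta,1]$, and then estimates $\int_0^1 e^{i\xi r}r^{d-k-\frac12}(1-r^\alpha)^{2/\alpha}\,dr$ as an oscillatory integral with singular behaviour at \emph{both} endpoints; the $r=0$ endpoint contributes $\xi^{-(1+d-k-\frac12)}$, and it is exactly this term (for $d=1$, $k=0$) that forces the case distinction and the weaker exponent $\abs{w}^{-2d}$ when $\frac{2}{\alpha}>d-\frac12$. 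Your preliminary integration by parts via $\frac{d}{dr}\bigl[r^dJ_d(\lambda r)\bigr]=\lambda r^dJ_{d-1}(\lambda r)$ is not in the paper: it trades $(1-r^\alpha)^{2/\alpha}$ for the integrable singularity $(1-r^\alpha)^{2/\alpha-1}$ while simultaneously raising the power of $r$ near the origin to $r^{d+\alpha-1}$, so that after the cutoff at $r_0=1/\lambda$ (where you also use the sharper small-argument bound $\abs{J_d(x)}\lesssim x^d$) the $r=0$ endpoint contributes only $O(\lambda^{-(d+\alpha-\frac12)})$ to the oscillatory integral and one term of the asymptotic expansion plus a single application of \eqref{oscillatory-2} suffices. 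What this buys is a \emph{uniform} bound $\abs{w}^{-(d+\frac12+\frac{2}{\alpha})}$, which is consistent with the sharp decay dictated by the equatorial degeneracy and which implies both cases of the lemma at once (since $d+\frac12+\frac{2}{\alpha}>2d$ precisely when $\frac{2}{\alpha}>d-\frac12$); the price is that you must check the smoothness and uniform $C^1$ bounds of the cofactor $\bigl(\frac{1-(1-u)^\alpha}{u}\bigr)^{2/\alpha-1}(1-u)^{d+\alpha-3/2}$ on $[0,1]$, which you correctly flag. The $\alpha=2$ identity via Sonine's integral is the same computation as the paper's. Two cosmetic slips: the $[0,r_0]$ piece is actually $O(\lambda^{-(d+1+\alpha)})$, not $O(\lambda^{-(d+\alpha)})$ (harmless, as you only need an upper bound), and the condition $d+\alpha-\frac52>-1$ governs integrability of the Bessel remainder near $r=0$, while near $r=1$ it is $\frac{2}{\alpha}-1>-1$ that you need; both hold for $\alpha>2$.
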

%{\bf To do : State for $\abs{s}\le 1$.}

%\subsubsection{Decay at the hyperplane $s=0$}
\begin{proof}
In view of \eqref{FT-A-1}, it again suffices to prove the lemma for $A=1$. Putting $s=0$ in the expression derived for $\widehat{\chi_{B_1^\alpha}}(w,s)$ in the beginning of this 
section, we have
$$ \widehat{\chi_{B_1^\alpha}}(w,0) =  \frac{C_d}{\abs{w}^{d-1}} \int_0^1 J_{d-1}(r \abs{w}) \left(1-r^\alpha \right)^{2/\alpha} r^d \, dr.$$
Recall first that the Bessel functions satisfy following identity (see e.g \cite{Gr} Appendix B.3, p. 427) :
\begin{eqnarray*}
\int_0^1 J_\mu(rs) (1-r^2)^{\nu} r^{\mu+1} \, dr = C_{\nu} \frac{J_{\mu+\nu+1}(s)}{s^{\nu+1}} \, ,
\end{eqnarray*}
for any $\mu > -1/2, \, \nu > -1.$ From this identity we get in the case $\alpha =2,$
\begin{eqnarray*}
\widehat{\chi_{B_1^2}}(w,0) = C_d \frac{J_{d+1}(2\pi \abs{w})}{\abs{w}^{d+1}} \, .
\end{eqnarray*}
In general, for any $\alpha >2$, we are left to study the bound (as $1 \leq \xi \to \infty$) for
$$I(\xi) = \int_0^1 J_{d-1}(\xi r) \left(1-r^\alpha \right)^{2/\alpha} r^d \, dr.$$
The proof will be completed once we show that
\begin{eqnarray*}
\abs{I(\xi)} \lesssim
\begin{cases}
\xi^{-(d+1)} &; \mbox{ if } \frac{2}{\alpha} > d-\frac{1}{2} \\
\xi^{-(\frac{3}{2} + \frac{2}{\alpha})} &; \mbox{ if } \frac{2}{\alpha} \leq d-\frac{1}{2}.
\end{cases}
\end{eqnarray*}
In order to prove the above estimate, we utilize the asymptotic expansion of $J_{d-1}$ which is 
valid when $\xi r\ge 1$.  Therefore, we divide the integration over $ r\in [0, 1]$ to two parts : 
on  the interval $[0,\delta]$ and  on the interval $[\delta, 1]$, where $\delta \geq \xi^{-1}$ is 
fixed and will be chosen momentarily. Since $J_{d-1}$ is a bounded function, we clearly have
\begin{eqnarray*}
\abs{\int_0^{\delta} J_{d-1}(\xi r) \left(1-r^\alpha \right)^{2/\alpha} r^d \, dr} \lesssim \int_0^\delta r^d dr \lesssim \, \delta^{d+1}\,.
\end{eqnarray*}
To estimate  the second interval, we recall the following asymptotic expression for 
$J_{d-1} (\xi r)$, valid when $\xi r \ge 1$   \cite[p. 356]{St} :
\begin{eqnarray*}
J_{d-1}(\xi r) &\sim& \left(\frac{\pi \xi r}{2}\right)^{-1/2} \cos \left(\xi r - \frac{(d-1)\pi}{2}
- \frac{\pi}{4}\right) \sum_{j=0}^\infty a_j \left(\xi r\right)^{-2j} \\
&& + \left(\frac{\pi \xi r}{2}\right)^{-1/2} \sin \left(\xi r - \frac{(d-1)\pi}{2} - \frac{\pi}{4}\right)
\sum_{j=0}^\infty b_j \left(\xi r\right)^{-2j-1}.
\end{eqnarray*}
with the implied constant independent of $\xi r$. 

We make use of the first two terms in the above aysmptotic expression. The estimate in question 
then becomes equivalent to
$$\int_{\delta}^1 \left[e^{i \xi r} \left(    \sum_{k=0}^{1} c_j (\xi r)^{-\left(k+ \frac{1}{2}\right)}\right) +
O\left((\xi r)^{-5/2}\right)\right] (1-r^\alpha)^{2/\alpha} r^d \, dr \,.$$
We estimate the $O$-term as follows :
\begin{eqnarray*}
\abs{\xi^{-5/2} \int_{\delta}^1 r^{d-5/2} \, dr} = \xi^{-5/2}
\abs{\frac{1-\delta^{d-3/2}}{d-3/2}} \leq 2
\begin{cases}
\xi^{-5/2} \delta^{-1/2} &; \mbox{ if } d=1 \\
\xi^{-5/2} &; \mbox{ if } d>1 .\\
\end{cases}
\end{eqnarray*}
%The motivation for taking two terms in the asymptotic expansion of $J_{d-1}$ lies in the estimate of the big-$O$ term.  \\
%
The main term can be written as the difference of two integrals over the intervals of $[0,1]$
and $[0,\delta]$. The integral over the interval $[0,\delta]$ is bounded by
\begin{eqnarray*}
\sum_{k=0}^1 \abs{c_j} \xi^{-\left(k+ \frac{1}{2}\right)} \int_0^\delta r^{d-k-\frac{1}{2}} \, dr
= \sum_{k=0}^1 \frac{\abs{c_j}}{d-k+\frac{1}{2}} \xi^{-\left(k+ \frac{1}{2}\right)} \delta^{d-k+\frac{1}{2}} \, .
\end{eqnarray*}
Finally we proceed with the main term over the interval $[0,1]$ :
$$ \sum_{k=0}^1 c_j \xi^{-\left(k+ \frac{1}{2}\right)} \int_0^1 e^{i \xi r}
r^{d-k-\frac{1}{2}} (1-r^\alpha)^{2/\alpha} \, dr \,.$$
Now an argument similar to that of Case II in the proof of Lemma \ref{taxis} can be given to prove that
$$\abs{\int_0^1 e^{i \xi r} r^{d-k-\frac{1}{2}} (1-r^\alpha)^{2/\alpha} \, dr} \lesssim 
\xi^{-\left(1+ \min \left\{d-k-\frac{1}{2}, \, \frac{2}{\alpha}\right\}\right)} \, .$$
Collecting all the bounds, we see that $I(\xi) = \int_0^1 J_{d-1}(\xi r) \left(1-r^\alpha
\right)^{2/\alpha} r^d dr$ is dominated by a finite sum of terms of the form
$$\delta^{d+1}, \, \xi^{-\frac{5}{2}} \delta^{-\frac{1}{2}}, \, \xi^{-\left(k+\frac{1}{2}\right)} \delta^{d-k+\frac{1}{2}}, \, 
\xi^{-\left(k+\frac{3}{2}\right)} \xi^{-\min\left\{ d-k-\frac{1}{2}, \, \frac{2}{\alpha} \right\} }$$
for $k = 0,1.$ Choosing $\delta = \xi^{-1}$, we verify that
\begin{eqnarray*}
\abs{I(\xi)} \lesssim
\begin{cases}
\xi^{-(d+1)} &; \mbox{ if } \frac{2}{\alpha} > d-\frac{1}{2} \\
\xi^{-(\frac{3}{2} + \frac{2}{\alpha})} &; \mbox{ if } \frac{2}{\alpha} \leq d-\frac{1}{2}.
\end{cases}
\end{eqnarray*}
This completes the of proof Lemma \ref{xyaxis}.
\end{proof}
%%%%%%%%%%%%%%%%%%%%%%%%%%%%%%%%%%%%%%%%%%%%%%%%%%%%%%%%%%%%%%%%%%%%%%%%%%%%%%%%%%%%
%%%%%%%%%%%%%%%%%%%%%%%%%%%%%%%%%%%%%%%%%%%%%%%%%%%%%%%%%%%%%%%%%%%%%%%%%%%%%%%%%%%%
%%%%%%%%%%%%%%%%%%%%%%%%%%%%%%%%%%%%%%%%%%%%%%%%%%%%%%%%%%%%%%%%%%%%%%%%%%%%%%%%%%%%
%%%%%%%%%%%%%%%%%%%%%%%%%%%%%%%%%%%%%%%%%%%%%%%%%%%%%%%%%%%%%%%%%%%%%%%%%%%%%%%%%%%%

%\subsubsection{Decay near the hyperplane $s=0$}.
%{\bf To do : prove for $s\le 1$}

\subsubsection{Completion of the Fourier transform decay estimates}%  $\abs{w}\ge 1$ and $\abs{s}\ge 1$.} 
The third case we consider is establishing decay when $\abs{w} \ge 1$ and $\abs{s}\ge A^{2/\alpha}	$. 
\begin{lem} \label{nonzero}
For any $\alpha \geq 2$ and  $\abs{w} \geq 1$ and $\abs{s} \geq A^{2/\alpha}$,
\begin{eqnarray}
\abs{\widehat{\chi_{B_1^{\alpha,A}}}(w,s)} \lesssim  \abs{w}^{-d} \abs{s}^{-1}.
\end{eqnarray}
Moreover, for $\alpha =2,$ we have the better decay estimate
\begin{eqnarray}
\abs{\widehat{\chi_{B_1^{2,A}}}(w,s)} \lesssim \abs{w}^{-\left(d-\frac{1}{2}\right)} \abs{s}^{-1} \abs{(w,s)}^{-1/2}.
\end{eqnarray}
\end{lem}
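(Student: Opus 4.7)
The proof combines the integration-by-parts strategy of Lemma~\ref{taxis} with the Bessel-function asymptotics used in Lemma~\ref{xyaxis}. By the scaling identity~\eqref{FT-A-1} we may reduce to $A=1$. Starting from the identity derived at the beginning of \S\ref{s=0},
$$\widehat{\chi_{B_1^\alpha}}(w,s) \;=\; \frac{C_d}{s\,\abs{w}^{d-1}}\int_0^1 \sin(2\pi sT(r))\,J_{d-1}(2\pi r\abs{w})\,r^d\,dr, \qquad T(r)=(1-r^\alpha)^{2/\alpha},$$
the factor $s^{-1}$ is already explicit. I would then integrate by parts using the Bessel identity $\tfrac{d}{dr}[r^d J_d(2\pi r\abs{w})] = 2\pi\abs{w}\,r^d J_{d-1}(2\pi r\abs{w})$; the boundary terms vanish since $J_d(0)=0$ (as $d\geq 1$) and $T(1)=0$, producing
$$\widehat{\chi_{B_1^\alpha}}(w,s) \;=\; -\frac{C_d}{\abs{w}^d}\int_0^1 r^d J_d(2\pi r\abs{w})\,T'(r)\,\cos(2\pi sT(r))\,dr,$$
which already exhibits the $\abs{w}^{-d}$ factor.

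To extract the further $\abs{s}^{-1}$ decay, I would split the analysis into two regimes. In the range $\abs{s}\lesssim\abs{w}^{1/2}$, the Bessel bound $\abs{J_d(x)}\lesssim x^{-1/2}$ combined with the integrability of $r^{d-1/2}\abs{T'(r)}$ on $[0,1]$ yields the stronger estimate $\abs{\widehat{\chi_{B_1^\alpha}}(w,s)}\lesssim \abs{w}^{-(d+1/2)}\leq\abs{w}^{-d}\abs{s}^{-1}$. In the opposite regime $\abs{s}\gtrsim\abs{w}^{1/2}$, one uses the oscillation of $\cos(2\pi sT(r))$: inserting the Bessel asymptotic expansion $J_d(x)\sim (\pi x)^{-1/2}\cos(x-\phi_d)$ (valid for $r\abs{w}\geq 1$) reduces the problem to oscillatory integrals with phases $\psi_\pm(r)=2\pi sT(r)\pm 2\pi r\abs{w}$, which are controlled by the van der Corput-type estimates of \S\ref{vdC}; the small-$r$ range $r\abs{w}<1$ is handled trivially using $\abs{J_d(2\pi r\abs{w})}\lesssim (r\abs{w})^d$.

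For the sharper bound when $\alpha=2$, the explicit form $T(r)=A^{-1}(1-r^2)$ renders the phase $\psi_\pm$ quadratic with $\psi_\pm''(r)=\mp 4\pi sA^{-1}$ and, when $\abs{w}\lesssim\abs{s}$, a non-degenerate interior stationary point $r_0=\mp A\abs{w}/(2s)$. Classical stationary phase then supplies an additional factor $\abs{s}^{-1/2}$ in the stationary regime, while a further integration by parts supplies an additional $\abs{w}^{-1/2}$ saving in the non-stationary regime $\abs{w}\gtrsim\abs{s}$. Combined with the prefactor $\tfrac{C_d}{s\abs{w}^{d-1}}$, this produces the claimed refinement $\abs{\widehat{\chi_{B_1^{2,A}}}(w,s)}\lesssim\abs{w}^{-(d-1/2)}\abs{s}^{-1}\abs{(w,s)}^{-1/2}$.

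The main obstacle lies in the van der Corput analysis for $\alpha>2$ near $r=1$, where $T'(r)\sim-(1-r^\alpha)^{(2-\alpha)/\alpha}$ and consequently $\psi_\pm''(r)$ are unbounded; as in the proof of Lemma~\ref{xyaxis}, this is resolved by splitting the integration at a suitable $r=1-\delta$, estimating the tail via \eqref{oscillatory-2} and the remaining piece by stationary-phase / non-stationary IBP, and then optimising $\delta$ against the Bessel and van der Corput bounds.
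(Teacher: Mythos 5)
Your starting point and overall toolkit (reduction to $A=1$ via \eqref{FT-A-1}, the radial reduction to a one-dimensional Bessel-weighted integral, Bessel asymptotics, van der Corput) match the paper's, but your first move — integrating by parts against $\frac{d}{dr}\left[r^d J_d(2\pi r\abs{w})\right]=2\pi\abs{w}\,r^d J_{d-1}(2\pi r\abs{w})$ to produce the $\abs{w}^{-d}$ prefactor at the cost of consuming the explicit $s^{-1}$ — is a genuine reorganization, and it is exactly where your argument develops a gap. After that step you must extract a \emph{full} factor $\abs{s}^{-1}$ from $\int_0^1 r^d J_d(2\pi r\abs{w})\,T'(r)\cos(2\pi sT(r))\,dr$. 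Your regime $\abs{s}\lesssim\abs{w}^{1/2}$ is fine, but in the complementary regime the phase $\psi_+(r)=2\pi sT(r)+2\pi r\abs{w}$ has an interior stationary point (for $s>0$ and $\alpha>2$ it always does, since $sT'$ sweeps $(-\infty,0]$), and the second-derivative van der Corput test only yields $\abs{s}^{-1/2}$ there. When $\abs{s}\gg\abs{w}$ this falls short of the required $\abs{w}^{1/2}\abs{s}^{-1}$, so the estimate does not close as written. It can in fact be rescued, but only by a further quantitative analysis you do not carry out: one must use that at the stationary point $r_*$ (where $s\abs{T'(r_*)}=\abs{w}$) the amplitude $r^{d-1/2}\abs{T'(r)}$ is small and $\abs{\psi_+''}$ is much larger than $s$, and check that these gains exactly compensate. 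In addition, your post-IBP amplitude contains $T'(r)\sim -(1-r^\alpha)^{2/\alpha-1}$, which has unbounded variation on $[0,1]$ for $\alpha>2$; van der Corput with amplitude requires bounded variation, so your proposed cutoff at $r=1-\delta$ forces yet another optimization that you have not verified. The same bookkeeping issue affects your $\alpha=2$ claim: "classical stationary phase supplies $\abs{s}^{-1/2}$" gives the stated refinement only after one tracks the size $\left(\abs{w}/\abs{s}\right)^{d+1/2}$ of the amplitude at the stationary point $r_0=\abs{w}/(2s)$.

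The paper's proof avoids all of this by never giving up the $s^{-1}$: it keeps the representation $\widehat{\chi_{B_1^\alpha}}(w,s)=c_d\abs{w}^{-(d-1)}s^{-1}\int_0^1 J_{d-1}(2\pi r\abs{w})\sin\bigl(2\pi s(1-r^\alpha)^{2/\alpha}\bigr)r^d\,dr$, splits off $[0,\delta]$ with $\delta=\abs{w}^{-1}$, inserts only the leading Bessel asymptotic (amplitude $r^{d-1/2}$, bounded variation, no singularity), and then needs only a total gain of $\abs{w}^{-1}$ from the oscillatory integral with phase $\abs{w}\phi(r)$, $\phi(r)=r+\frac{s}{\abs{w}}(1-r^\alpha)^{2/\alpha}$. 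This is achieved by splitting $[0,1]$ at the unique $r_0$ with $r_0^{\alpha-1}(1-r_0^\alpha)^{2/\alpha-1}=\abs{w}/(4s)$, so that $\abs{\phi'}\ge\frac12$ on $[0,r_0]$ and $\abs{\phi''}\ge\frac12$ on $[r_0,1)$, and applying van der Corput with large parameter $\abs{w}$ on each piece; for $\alpha=2$ the quadratic phase $\zeta(r)=\abs{w}r+s(1-r^2)$ satisfies either $\zeta'\gtrsim\abs{(w,s)}$ or $\zeta''\gtrsim\abs{(w,s)}$ on all of $[0,1]$, giving the extra $\abs{(w,s)}^{-1/2}$ directly. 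I recommend you either adopt this decomposition or, if you wish to keep your Bessel integration by parts, supply the stationary-point size estimates described above; as it stands, the step "which are controlled by the van der Corput-type estimates of \S 2.2.2" does not deliver the claimed $\abs{s}^{-1}$.
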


%\begin{prop} \label{prop:decay}
%For $1 \le  \abs{\xi}, \abs{T} \to \infty $ and $\alpha \ge 2$
%$$\abs{ \int_0^1\left( \int_{-1}^1 e^{-i \xi ru} (1-u^2)^{\frac{2d-3}{2}} du\right)
%\frac{ \sin\left(T(1-r^\alpha)^{2/\alpha})\right)  }{T} r^{2d-1} dr} 
%\le C_d \abs{ \xi}^{-d}\abs{T}^{-1}
%$$
%\end{prop}

\begin{proof}
As observed in the earlier lemmas, here also it suffices to prove the estimates for $A=1$, thanks 
to \eqref{FT-A-1}. As mentioned in the beginning of \S \ref{s=0}, we have
$$\widehat{\chi_{B_1^\alpha}}(w,s) = c_d \abs{w}^{-(d-1)} s^{-1} \int_0^1 J_{d-1}(2\pi r \abs{w}) 
\sin\left(2\pi s(1-r^\alpha)^{2/\alpha}\right) r^d \, dr\,.$$ 

We divide the integration over $ r\in [0, 1]$ to two parts : on  the interval $[0,\delta]$ and on 
the interval $[\delta, 1]$. $J_{d-1}(\cdot)$ and $\sin (\cdot)$ being  bounded functions on the 
interval, we clearly have 
$$\abs{\int_0^{\delta} J_{d-1}(2\pi r \abs{w}) \sin\left(2\pi s(1-r^\alpha)^{2/\alpha}\right) r^d \, dr} 
\lesssim \int_0^{\delta} r^d \, dr = \frac{\delta^{d+1}}{d+1}\,.$$

To estimate the integral on the interval $[\delta, 1]$, we once again make use of the asymptotic
expression for the Bessel function $J_{d-1} (\cdot)$ as we did in the proof of Lemma \ref{xyaxis}.
This time we make use only of the first term. 
This integral can be written as the difference of two integrals over the intervals of $[0,1]$
and $[0,\delta]$. The integral over the interval $[0,\delta]$ is bounded by
$$\abs{w}^{-1/2} \delta^{d+1/2}.$$
The integral over $[\delta,1]$ is then bounded by
$$\left(\abs{w}^{-1/2} \int_0^1 e^{2\pi i \abs{w} r} \sin\left(2\pi s(1-r^\alpha)^{2/\alpha}\right) r^{d-\frac{1}{2}} \, dr\right) 
+ \abs{w}^{-1/2}\delta^{d+1/2}.$$
Now it suffices to estimate (from here onwards we drop the constant $2\pi$ for the sake of convenience)
$$\abs{w}^{-1/2} \int_0^1 e^{i \abs{w} r}e^{is (1-r^\alpha)^{2/\alpha}} r^{d-\frac12} \, dr \, ,$$
in both cases when $s$ is positive or negative. The above integral is the same as
\begin{eqnarray} \label{nonzeroeq-oscillatory}
\abs{w}^{-1/2} \int_0^1 \exp\left(i \abs{w} \phi_{\abs{w}, s}(r)\right) r^{d-\frac12} \, dr \, ,
\end{eqnarray}
where the phase function is given by $\phi_{\abs{w}, s}(r) = r + \frac{s}{\abs{w}} (1-r^\alpha)^{2/\alpha}.$ 

We now use the van-der-Corput lemma to estimate this integral. Note that the derivative of the phase 
function is given by 
$$ \phi_{\abs{w}, s}^{\prime}(r) = 1 - \frac{2s}{\abs{w}} r^{\alpha-1} (1-r^\alpha)^{\frac{2}{\alpha}-1}.$$
Clearly, $\phi_{\abs{w}, s}^{\prime}(r) \geq 1$ for all $r \in [0,1]$ when $s$ is negative. The difficulty 
arises when $s$ is positive. To handle this case, notice first that if $\alpha \geq 2$, then  
$r^{\alpha-1} (1-r^\alpha)^{\frac{2}{\alpha}-1}$ is a monotonically increasing function for 
$r \in[0,1)$, as follows from the fact that its derivative is strictly positive in $(0,1)$. 
Assume for now that $\alpha>2$, and we will take care of the case when $\alpha =2$ separately. 
When $\alpha > 2$ the limit of the latter function as $r \to 1^-$ is $+\infty$, and therefore 
there exists unique point $r_0 \equiv r_0(\abs{w},s)\in (0,1)$ at which 
$r_0^{\alpha-1} (1-r_0^\alpha)^{\frac{2}{\alpha}-1}  = \frac{\abs{w}}{4s}$.  % Note that $r_0 \in [0,1]$ except for when $\alpha=2$ and $\frac{|w|}{4s}>1$.
Then for $r \in [0, r_0]$, the first derivative is bounded from below by 
$$\abs{\phi_{\abs{w}, s}^{\prime}(r)} \geq \frac{1}{2} \, .$$
In the complementary range $[r_0,1)$, we will now show that $\abs{\phi_{\abs{w}, s}^{\prime \prime}(r)}$ 
is also bounded from below by $1/2$. Indeed, for $\alpha>2$,
$$\phi_{\abs{w}, s}^{\prime \prime}(r) = \frac{-2s}{\abs{w}} r^{\alpha-2} (1-r^\alpha)^{\frac{2}{\alpha}-2} \left((\alpha-1)-r^\alpha \right).$$
Notice that $\frac{ (\alpha-1) -r^{\alpha} }{ 1-r^{\alpha}} \ge 1$, and therefore for $r\in [r_0,1)$, 
it is easy to verify that
\begin{align*}
\abs{\phi_{\abs{w}, s}^{\prime \prime}(r)} &\geq \frac{2s}{\abs{w}} r^{\alpha-1} (1-r^\alpha)^{\frac{2}{\alpha}-1}\geq \frac{1}{2} \, .
\end{align*}
In summary, 
\begin{eqnarray*}
\abs{\phi_{\abs{w}, s}^{\prime}(r)} &\geq& \frac{1}{2} \, \text{ for } r \in [0, r_0],\\
\abs{\phi_{\abs{w}, s}^{\prime \prime}(r)} &\geq& \frac{1}{2} \, \text{ for } r \in [r_0, 1).
\end{eqnarray*}

So far we have seen that one can divide the interval $[0,1]$ into exactly two parts such that on 
one part $\abs{\phi_{\abs{w},s}^{\prime}}$ is bounded below by $1/2$, while on the other part 
$\abs{\phi_{\abs{w},s}^{\prime \prime}}$ is bounded below by $1/2$. In addition, 
$\phi_{\abs{w},s}^{\prime}$ is monotone on $(0,1)$, since $ \phi_{\abs{w},s}^{\prime \prime}$ 
has a constant sign there. We conclude that on each of such subintervals one can apply 
van-der Corput lemma  (in the form stated e.g. in  \cite[p. 332-334]{St}), so that :
$$\abs{w}^{-1/2} \abs{\int_0^1 \exp\left(i \abs{w} \phi_{\abs{w}, s}(r)\right) r^{d-\frac12} \, dr} 
\lesssim \abs{w}^{-1/2} \abs{w}^{-1/2} = \abs{w}^{-1}.$$

As to the case $\alpha=2$, let now $\abs{w}\phi_{\abs{w},s}(r) = \zeta(r) = \abs{w}r+s(1-r^2).$ 
Again we should consider just the case where $s$ is positive and $s\ge 1$, and then observe that if 
$\frac{\abs{w}}{4s} \geq 1$, then $\zeta^{\prime}(r) \geq \frac{\abs{w}}{2} \geq \frac{2}{\sqrt{17}} \abs{(w,s)}$ 
for all $r\in [0,1]$ (here $\abs{(w,s)}$ is the Euclidean norm). On the other hand, if $0<\frac{\abs{w}}{4s} \leq 1$, then 
$\zeta^{\prime \prime}(r) = 2s \geq \frac{2}{\sqrt{17}} \abs{(w,s)}$ 
for all $r\in [0,1]$. We now apply van-der Corput lemma on the interval $[0,1]$ and we have
$$\abs{w}^{-1/2} \abs{\int_0^1 e^{i \zeta(r)} r^{d-\frac12} \, dr} \lesssim \abs{w}^{-1/2} \abs{(w,s)}^{-1/2}.$$

Finally, comparing all the bounds leads to the choice of $\delta = \abs{w}^{-1}$ and this completes the proof of Lemma \ref{nonzero}.
\end{proof}

%\begin{rem}\label{alpha-A}
%Notice that
%$$ \widehat{\chi_{B_1^{\alpha,A}}} (w,s) = A^{-\alpha/2} \, \widehat{\chi_{B_1^\alpha}} \left(w, A^{-\alpha/2}s\right).$$
%Therefore, the estimates of Lemmas \ref{taxis}, \ref{xyaxis} and \ref{nonzero} still hold true when 
%$B_1^{\alpha}$ is replaced by $B_1^{\alpha,A}$, though this time in the regions 
%$\set{w=0, \, \abs{s} \ge A^{\alpha/2}},$ $\set{\abs{w}\ge 1, \, s=0}$ and 
%$\set{\abs{w}\ge 1, \, \abs{s}\ge A^{\alpha/2}}$ respectively. 
%\end{rem}
%%%%%%%%%%%%%%%%%%%%%%%%%%%%%%%%%%%%%%%%%%%%%%%%%%%%%%%%%%%%%%%%%%%%%%%%%%%%%%%%%%%%
%%%%%%%%%%%%%%%%%%%%%%%%%%%%%%%%%%%%%%%%%%%%%%%%%%%%%%%%%%%%%%%%%%%%%%%%%%%%%%%%%%%%
%%%%%%%%%%%%%%%%%%%%%%%%%%%%%%%%%%%%%%%%%%%%%%%%%%%%%%%%%%%%%%%%%%%%%%%%%%%%%%%%%%%%
%%%%%%%%%%%%%%%%%%%%%%%%%%%%%%%%%%%%%%%%%%%%%%%%%%%%%%%%%%%%%%%%%%%%%%%%%%%%%%%%%%%%

\section{Counting lattice point result in Heisenberg norm balls}
In the present section we will prove the following result :
\begin{thm}\label{mama-thm} For every $d\ge 1$, and $\alpha \ge 2$
\begin{align*}
\abs{ \# \left(\mathbb{Z}^{2d+1} \cap B_R^{\alpha,A}\right)  - \abs{B_R^{\alpha,A}}} \lesssim 
\begin{cases}
R^{2d} &; \mbox{ for } \alpha = 2\\
R^{2d} \log(R) &; \mbox{ for } 2<\alpha \leq 4\\
R^{2d+\delta} &; \mbox{ for } \alpha > 4 \, ,
\end{cases}
\end{align*}
where $\delta =\delta(\alpha) = \frac{2 \left(\frac12 -\frac{2}{\alpha}\right)}{d+ \frac12 -\frac{2}{\alpha}} \, .$ 
\end{thm}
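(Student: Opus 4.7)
The proof will follow the blueprint laid out in the introduction, combining the two-sided convolution sandwich of Proposition \ref{triangle} with Euclidean Poisson summation. Starting from $\chi_{B_{R-\epsilon}^{\alpha,A}}\ast\rho_\epsilon \le \chi_{B_R^{\alpha,A}} \le \chi_{B_{R+\epsilon}^{\alpha,A}}\ast\rho_\epsilon$, summing at integer points, and subtracting the Haar measure of $B_R^{\alpha,A}$, the error splits into (i) a volume-difference term of size $O(\epsilon R^{2d+1})$ coming from $|B_{R\pm\epsilon}^{\alpha,A}| - |B_R^{\alpha,A}|$, and (ii) the Poisson tail
\[
E(R,\epsilon) \;=\; R^{2d+2}\sum_{(m,n)\in\Z^{2d+1}\setminus\{0\}}
\bigabs{\widehat{\chi_{B_1^{\alpha,A}}}(Rm, R^2 n)}\;\bigabs{\widehat{\rho}(\epsilon m, \epsilon^2 n)},
\]
where I have used $\widehat{\chi_{B_R^{\alpha,A}}}(w,s) = R^{2d+2}\widehat{\chi_{B_1^{\alpha,A}}}(Rw,R^2 s)$ and $\widehat{\rho_\epsilon}(w,s) = \widehat{\rho}(\epsilon w, \epsilon^2 s)$, both consequences of the fact that the underlying dilations are Heisenberg rather than Euclidean. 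The Schwartz factor $\widehat{\rho}(\epsilon m, \epsilon^2 n)$ effectively truncates the sum to the Heisenberg box $|m|\lesssim 1/\epsilon$, $|n|\lesssim 1/\epsilon^2$, which is the precise reason for defining $\rho_\epsilon$ via Heisenberg dilations.

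I would then decompose the $E(R,\epsilon)$ sum into the three regions dictated by the spectral analysis of Section \ref{Heisenberg-spectral}: the $t$-axis $m=0$, $n\ne 0$ (controlled by Lemma \ref{taxis}); the equatorial hyperplane $n=0$, $m\ne 0$ (controlled by Lemma \ref{xyaxis}); and the generic cone $m,n\ne 0$ (controlled by Lemma \ref{nonzero}). In each region I would replace the sum by an integral after dyadic decomposition in $|m|$ and $|n|$, using the rapid decay of $\widehat{\rho}$ to truncate at the Heisenberg box. The $t$-axis contribution is dominated by $R^{2d - 2\min\{2d/\alpha,\alpha/2\}}$ up to a harmless log (from the borderline case that generates the $\log R$ factor when $\alpha\le 4$); the equatorial contribution, using the Bessel-type decay of Lemma \ref{xyaxis}, contributes at worst $R^{2d-1}$; the generic contribution, combining $|Rm|^{-d}|R^2 n|^{-1}$ with dyadic summation cut off at the Heisenberg box, produces a term polynomial in $\epsilon^{-1}$ that is driven by the same $\alpha$-dependent exponent.

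Finally, I would optimize $\epsilon$ to balance the volume term $\epsilon R^{2d+1}$ against the dominant Fourier tail. For $\alpha=2$, Lemma \ref{xyaxis} gives the sharp Bessel decay $\widehat{\chi_{B_1^{2,A}}}(w,0)=C_{d,A}J_{d+1}(2\pi|w|)/|w|^{d+1}$, and the choice $\epsilon \sim R^{-1}$ yields the $R^{2d}$ bound with no logarithmic loss. For $2<\alpha\le 4$ the $t$-axis sum saturates at a borderline exponent and picks up the $\log R$; choosing again $\epsilon\sim R^{-1}$ gives $R^{2d}\log R$. For $\alpha>4$ the decay along the $s$-axis weakens past the critical threshold, the optimal $\epsilon$ is no longer $R^{-1}$ but rather $\epsilon \sim R^{-1+\delta/2}$, and one reads off the stated exponent $2d+\delta$ with $\delta = 2(\tfrac12-\tfrac2\alpha)/(d+\tfrac12-\tfrac2\alpha)$.

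The main technical hurdle will be the generic region: there the product bound $|Rm|^{-d}|R^2 n|^{-1}$ is barely integrable in $m$ and logarithmically divergent in $n$, so care is needed in summing over the anisotropic Heisenberg box and separating off the axes. A clean way to organize this is to perform independent dyadic decompositions in $|m|$ and $|n|$, bound each dyadic block by the number of lattice points times the Fourier estimate, and sum geometrically; the Schwartz tail of $\widehat{\rho}$ kills the contribution of dyadic blocks outside the Heisenberg box, reducing the whole computation to a finite collection of geometric sums whose optimum determines the final exponent and the dimensional transitions at $d=1,2$ and $d\ge 3$.
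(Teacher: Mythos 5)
Your architecture is exactly the paper's: the convolution sandwich of Proposition \ref{triangle}, Poisson summation with the Heisenberg scalings $\widehat{\chi_{B_R}}(w,s)=R^{2d+2}\widehat{\chi_{B_1}}(Rw,R^2s)$ and $\widehat{\rho_\epsilon}(w,s)=\widehat{\rho}(\epsilon w,\epsilon^2 s)$, the three-region split matching Lemmas \ref{taxis}, \ref{xyaxis} and \ref{nonzero}, truncation at the anisotropic box $\abs{k'}\le\epsilon^{-1}$, $\abs{k''}\le\epsilon^{-2}$, and a final optimization in $\epsilon$. But your region-by-region accounting is wrong in a way that would derail that optimization. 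The term that actually governs the $\alpha$-dependence is the equatorial one, which you dismiss as ``at worst $R^{2d-1}$'': Lemma \ref{xyaxis} gives $\abs{\widehat{\chi_{B_1}}(Rk',0)}\lesssim\abs{Rk'}^{-(d+\frac12+\frac{2}{\alpha})}$, and summing over $1\le\abs{k'}\le\epsilon^{-1}$ in $\Z^{2d}$ yields $R^{d+\frac32-\frac{2}{\alpha}}\,\epsilon^{-(d-\frac12-\frac{2}{\alpha})}$ (with a logarithm in the borderline case $d=1$); at $\epsilon\sim R^{-1}$ this is $R^{2d+1-\frac{4}{\alpha}}$, which exceeds $R^{2d}$ precisely when $\alpha>4$ and is the sole source of the degraded exponent $2d+\delta$. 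If the equatorial term really were $O(R^{2d-1})$, then $\epsilon\sim R^{-1}$ would give $R^{2d}\log R$ for every $\alpha\ge2$ and no $\delta(\alpha)$ would appear, so your intermediate estimates are inconsistent with the answer you claim to read off.

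Two further corrections. The $s$-axis term is $R^{2d+2}\sum_{k''\neq 0}\abs{R^2k''}^{-(1+\mu)}(\cdots)=O(R^{2d-2\mu})$ with $\mu=\min\{\frac{2d}{\alpha},\frac{\alpha}{2}\}>0$, a convergent series with no logarithm; it is never the binding constraint, the $\log$ factor instead comes from $\sum_{0<\abs{k''}\le\epsilon^{-2}}\abs{k''}^{-1}$ in the generic region (whose bound $\abs{w}^{-d}\abs{s}^{-1}$ carries no $\alpha$-dependence at all) and from the equatorial sum when $d=1$, and the transition at $\alpha=4$ has nothing to do with $s$-axis decay. Finally, the balance for $\alpha>4$ is between the equatorial term and the volume term $R^{2d+1}\epsilon$, which forces $\epsilon=R^{\delta-1}$; your choice $\epsilon\sim R^{-1+\delta/2}$ makes the volume term $R^{2d+\delta/2}$ but pushes the equatorial term to the exponent $2d+\delta\left(\frac{d}{2}+\frac34-\frac{1}{\alpha}\right)$, which strictly exceeds $2d+\delta$ for all $d\ge1$ and $\alpha>4$, so the stated bound would not follow from your parameters.
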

We recall that the volume of $B_R^{\alpha,A}$ is given by $\abs{B_R^{\alpha,A}} = R^{2d+2} \abs{B_1^{\alpha,A}}$. 
Our proof will utilize the spectral estimates of the (Euclidean) Fourier transform of the unit ball $B_1^{\alpha,A}$ that were proved in the previous 
section, via the (Euclidean) Poisson summation formula, to which we now turn.

\subsection{Euclidean Poisson summation}
%We wish to estimate
%$$\abs{ \#\{ \mathbb{Z}^{2d+1} \cap B_R^\alpha \}  - |B^\alpha_{R}| },$$
%where 
%$$B^\alpha_R= \{ (z,t)\in \mathbb{R}^{2d}\times \mathbb{R} : |z|^\alpha + t^{\alpha/2} \le R^\alpha\}\,.$$
% Here $|B|$ denotes the Euclidean volume of a set $B$, so that 
%Recall that $|B^\alpha_R|= R^{2d+2}|B^\alpha_1|$ is the volume of $B^\alpha_R$.  
%and $\#(A)$ denotes the size of a finite set $A\subset \mathbb{R}^{2d+1}$.  
As noted in \S 2.1, we fix a bump function  $\rho: \mathbb{R}^{2d}\times \mathbb{R}\rightarrow \mathbb{R}$, which is a 
smooth non-negative function with support contained in the unit ball $B_1^{\alpha,A}$, such that $\rho(0) > 0$ and 
$\int_{B_1^{\alpha,A}} \rho(z,t) \, dz \, dt =1$. We then consider the family of functions defined by the Heisenberg dilates 
of $\rho$, namely $\rho_{\epsilon}(z,t) = \frac{1}{\epsilon^{2d+2}}\rho(\frac{z}{\epsilon}, \frac{t}{\epsilon^2})$.

\noindent {\bf Notation} : From here onward, to simplify the notation we write $B_R$ to denote $B_R^{{\alpha,A}}$.

We begin by obtaining estimates for $\# \left(\mathbb{Z}^{2d+1} \cap B_R\right)$ from above and below using Euclidean convolution.  
First, observe that by Proposition \ref{triangle}
\begin{align*}  \# \left(\mathbb{Z}^{2d+1} \cap B_R \right) &= \sum_{k\in \mathbb{Z}^{2d+1}} \chi_{B_R}(k) \\ 
& \le\sum_{k\in \mathbb{Z}^{2d+1}} \chi_{B_{R+\epsilon}}\ast \rho_{\epsilon}(k),
\end{align*}
where $\rho_\epsilon$ was defined above, and the convolution is Euclidean.  
% \mathbb{R}^{2d}\times \mathbb{R}\rightarrow \mathbb{R}$ is our choice for a smooth non-negative function with support contained in $B_1^\alpha$ such that $\int_{B_1^\alpha} \rho(z,t) dzdt =1$, and 
%$\rho_{\epsilon}(z,t) = \frac{1}{\epsilon^{2d+2}}\rho(\frac{z}{\epsilon}, \frac{t}{\epsilon^2})$.  \\
Similarly, 
$$\# \left(\mathbb{Z}^{2d+1} \cap B_R \right) \geq \sum_{k\in \mathbb{Z}^{2d+1}} \chi_{B_{R-\epsilon}}* \rho_{\epsilon}(k).$$

We now use the Euclidean Poisson summation formula, so that the upper bound becomes :
\begin{align*}  
\# \left(\mathbb{Z}^{2d+1} \cap B_R\right) & \le \sum_{k\in \mathbb{Z}^{2d+1}} \left(\chi_{B_{R+\epsilon}} \ast 
\rho_{\epsilon}\right)^{\widehat{}}(k)\\
& = \sum_{k\in \mathbb{Z}^{2d+1} \setminus \{\vec{0}\}} \widehat{\chi_{B_{R+\epsilon}}}(k) \widehat{\rho_{\epsilon}}(k) + \abs{B_{R+\epsilon}} \, .
\end{align*}
%where $\widehat{f}$ denotes the Euclidean Fourier transform of a function $f$, in the form
%$$\widehat{f}(w,s)=\int_{\R^{2d+1}}e^{-2\pi i (\inn{z,w}+ts)}f(z,t)\, dz \, dt \,.$$
Since $\rho$ is a compactly supported smooth bump function, the decay of the Fourier transform of 
$\rho_\epsilon(z,t)=\epsilon^{-2d-2}\rho(z/\epsilon,t/\epsilon^2)$  for any $N\in \N$ and any $(w,s)$ is bounded by 
\begin{equation}\label{rho-decay}
\abs{\widehat{\rho_\epsilon}(w,s)} = \abs{\widehat{\rho}(\epsilon w,\epsilon^2 s)}\le C_N 
\left(1+\abs{(\epsilon w,\epsilon^2 s)}\right)^{-N}\,.
\end{equation}

Continuing with the estimate, we have
\begin{align*} 
\# \left(\mathbb{Z}^{2d+1} \cap B_R \right) -\abs{B_R} 
&\le \sum_{k\in \mathbb{Z}^{2d+1} \setminus \{\vec{0}\}} \widehat{\chi_{B_{R+\epsilon}}}(k)\widehat{ \rho_{\epsilon}  }(k) 
+ \abs{B_{R+\epsilon}}-\abs{B_R}\\
&= \sum_{k\in \mathbb{Z}^{2d+1} \setminus \{\vec{0}\}} \widehat{\chi_{B_{R+\epsilon}}}(k)\widehat{ \rho_{\epsilon}  }(k) + 
O\left(R^{2d+1}\epsilon\right).
\end{align*}
Similarly,
$$\# \left(\mathbb{Z}^{2d+1} \cap B_R \right) - \abs{B_R} \geq - \abs{\sum_{k\in \mathbb{Z}^{2d+1} \setminus \{\vec{0}\}} 
\widehat{\chi_{B_{R-\epsilon}}}(k)\widehat{ \rho_{\epsilon}  }(k) } - O \left(R^{2d+1}\epsilon\right).$$
Combining these observations, we conclude that 
\begin{align*}\label{setup}
&\abs{\# \left(\mathbb{Z}^{2d+1} \cap B_R \right) -\abs{B_R}}\\
&\le \abs{\sum_{k\in \mathbb{Z}^{2d+1} \setminus \{\vec{0}\}} \widehat{\chi_{B_{R+\epsilon}}}(k)\widehat{ \rho_{\epsilon}}(k)} 
+ \abs{\sum_{\sum_{k\in \mathbb{Z}^{2d+1} \setminus \{\vec{0}\}}} \widehat{\chi_{B_{R-\epsilon}}}(k)\widehat{ \rho_{\epsilon}}(k)} 
+ O \left(R^{2d+1}\epsilon\right).
\end{align*}

%%%%%%% start cut new draft	
\subsection{Estimates} \label{sec:estimate}

Our task is now to bound
\begin{equation}\label{mamasum}
R^{2d+2} \sum_{k\in \mathbb{Z}^{2d+1} \setminus \{\vec{0}\}} \abs{\widehat{\chi_{B_{1}}}(Rk',R^2k'')} 
\abs{\widehat{\rho}(\epsilon k',\epsilon^2 k'')} + O\left(R^{2d+1}\epsilon\right).
\end{equation}
%
%\vspace{1em} % adds some space
%
%Before proceeding with this estimate, let us note the following :
%
%\begin{rem} As noted already $N_{\alpha,A}$ is also Euclidean subadditive, so in analyzing this case we are again led to estimating 
%(\ref{mamasum}) for the corresponding balls. For a fixed $A>0$, if $R > R_A$ then $(Rk^\prime,R^2 k'')$ falls in one of the three 
%regions mentioned in Remark \ref{alpha-A}, where the estimates of the decay of the Fourier transform of the unit ball hold. 
%Thus the argument for $N_{\alpha,A}$ is similar to the case for $N_\alpha$ detailed below.  
%\end{rem}
We will break the sum further to three regions to utilize our three separate spectral decay estimates as follows:

\begin{align*}
%&R^{2d+2}\sum \abs{\widehat{\chi_{B_{1}}}(Rk',R^2k'')\widehat{\rho_{\epsilon}}(k)}\abs{\widehat{\rho}(\epsilon k',\epsilon^2 k'')} + O\left(R^{2d+1}\epsilon\right)\\
%\le 
& R^{2d+2}\sum \abs{\widehat{\chi_{B_{1}}}(Rk',R^2k'')} \abs{\widehat{\rho}(\epsilon k',\epsilon^2 k'')}+ O\left(R^{2d+1}\epsilon\right)\\
= & R^{2d+2} \left(\sum_{k''\neq 0} \abs{\widehat{\chi_{B_{1}}} \left(\vec{0},R^2k''\right)} \abs{\widehat{\rho}(\vec{0}, \epsilon^2 k'')}\right) 
 +R^{2d+2}\left(\sum_{k'\neq \vec{0}} \abs{\widehat{\chi_{B_{1}}} \left(Rk', 0\right)} \abs{\widehat{\rho}(\epsilon k', 0)}\right)\\
& + R^{2d+2}\left(\sum_{k'\neq \vec{0}, \, k''\neq 0} \abs{\widehat{\chi_{B_{1}}} \left(Rk',R^2k''\right)}\abs{\widehat{\rho}(\epsilon k',\epsilon^2 k'')} \right)+ O\left(R^{2d+1}\epsilon\right)\\
:= & I + II + III + O\left(R^{2d+1}\epsilon\right).
\end{align*}

For $I$, we use Lemma \ref{taxis},
\begin{align*}
I &= R^{2d+2}\sum_{k''\neq 0} \abs{\widehat{\chi_{B_{1}}} \left(\vec{0},R^2k''\right)}\abs{\widehat{\rho}(\vec{0},\epsilon^2 k'')}\\
&\lesssim R^{2d-\min\{\frac{\alpha}{2},\frac{2d}{\alpha} \}} \sum_{k''\neq 0} \abs{k''}^{-\left(1+ \min\{\frac{\alpha}{2},\frac{2d}{\alpha} \}\right)}\left(1+\epsilon^2 \abs{k''}\right)^{-N}\\
&\lesssim R^{2d-\min\{\frac{\alpha}{2},\frac{2d}{\alpha} \}}.
\end{align*}

In order to take advantage of the decay of $\widehat{\rho}$ (as stated in equation (\ref{rho-decay})) in estimating $II$ and $III$, 
%$\abs{\widehat{\rho} \left(\epsilon k',\epsilon^2 k'' \right)} \le c_N \left(1+\abs{\left(\epsilon k',\epsilon^2 k''\right)}\right)^{-N}$,
%for $N$ arbitrarily large, 
we break the sum in $k= (k',k'')$ further into four pieces :\\
\textbf{sum 1 :} $\abs{k'} \leq \epsilon^{-1}$, $\abs{k''} \leq \epsilon^{-2}$, and $k \neq \vec{0}$ ;\\
\textbf{sum 2 :} $\abs{k'} \geq \epsilon^{-1}$, $\abs{k''} \leq \epsilon^{-2}$, and $k \neq \vec{0}$ ;\\
\textbf{sum 3 :} $\abs{k'} \leq \epsilon^{-1}$, $\abs{k''} \geq \epsilon^{-2}$, and $k \neq \vec{0}$ ;\\
\textbf{sum 4 :} $\abs{k'} \geq \epsilon^{-1}$, $\abs{k''} \geq \epsilon^{-2}$, and $k \neq \vec{0}$ .
For II, we use Lemma \ref{xyaxis}. We consider the case when $d-\frac12 \geq \frac{2}{\alpha}$ and $d-\frac12 \le \frac{2}{\alpha}$ separately.  
In either case, we need only consider $II$ over the set of $k\in \mathbb{Z}^{2d+1}$ satisfying the conditions listed in \textbf{sum 1} and \textbf{sum 2} with last coordinate equal to zero.

When $d-\frac12 \geq \frac{2}{\alpha}$,
\begin{align*}
II &= R^{2d+2} \sum_{k'\neq \vec{0}} \abs{\widehat{\chi_{B_{1}}} \left(Rk',0\right)} \abs{\widehat{\rho}(\epsilon k',0)}\\
&\lesssim R^{2d+2} \sum_{k'\neq \vec{0}} \abs{Rk'}^{-(d+\frac{1}{2}+ \frac{2}{\alpha})} \left(1+\epsilon \abs{k'}\right)^{-N}.
%&\sim R^{d+2-\frac{1}{2}-\frac{2}{\alpha}}\left( \frac{1}{\epsilon} \right)^{d-\frac{1}{2}-\frac{2}{\alpha}} \,,
\end{align*}

%For $d-\frac12 \geq \frac{2}{\alpha}$, let us first consider $II$ over the set of $k\in \mathbb{Z}^{2d+1}$ satisfying the conditions listed in \textbf{sum 1} with last coordinate equal to zero.
We first sum over the conditions listed in \textbf{sum 1} to get
\begin{align*}
II_{sum 1} 
&\lesssim R^{2d+2} \sum_{1 \leq \abs{k'} \leq \epsilon^{-1}} \abs{Rk'}^{-(d+\frac{1}{2}+ \frac{2}{\alpha})} \left(1+\epsilon \abs{k'}\right)^{-N}.\\
&\lesssim R^{d+\frac{3}{2}-\frac{2}{\alpha}}\left( 1/ \epsilon \right)^{d-\frac{1}{2}-\frac{2}{\alpha}} \,.
\end{align*}

Next, summing over the conditions listed in \textbf{sum 2}, we get 
%for $d-\frac12 \geq \frac{2}{\alpha}$, we consider $II$ over the set of $k\in \mathbb{Z}^{2d+1}$ satisfying the conditions listed in \textbf{sum 2} with last coordinate equal to zero.
\begin{align*}
II_{sum 2} 
&\lesssim R^{2d+2} \sum_{\abs{k'} \geq \epsilon^{-1}} \abs{Rk'}^{-(d+\frac{1}{2}+ \frac{2}{\alpha})} \abs{\epsilon k'}^{-N}.\\
&\lesssim R^{d+\frac{3}{2}-\frac{2}{\alpha}}\left( 1/ \epsilon \right)^{d-\frac{1}{2}-\frac{2}{\alpha}} \,.
\end{align*}

We conclude that, for $d-\frac12 \geq \frac{2}{\alpha}$, $II \lesssim R^{d+\frac{3}{2}-\frac{2}{\alpha}}\left( \frac{1}{\epsilon} \right)^{d-\frac{1}{2}-\frac{2}{\alpha}}$.

%%part II, case 2
When $d-\frac12 \le \frac{2}{\alpha}$, which is only the case for $d=1$ since $\alpha\geq 2$,
\begin{align*}
II &= R^{4} \sum_{k'\neq \vec{0}} \abs{\widehat{\chi_{B_{1}}}\left(Rk',0\right)} \abs{\widehat{\rho}(\epsilon k',0)}\\
&\lesssim R^{4} \sum_{k'\neq \vec{0}} \abs{Rk'}^{-2} \left(1+\epsilon \abs{k'}\right)^{-N}.
%&\sim R^2 \log\left(\frac{1}{\epsilon}\right).
\end{align*}
%Notice that the estimate for II is larger than the estimate for I.

%For $d-\frac12 \le \frac{2}{\alpha}$, let us first consider $II$ over the set of $k\in \mathbb{Z}^{2d+1}$ satisfying the conditions listed in \textbf{sum 1} with last coordinate equal to zero.
Once again, we first sum over the conditions listed in \textbf{sum 1} to get
\begin{align*}
II_{sum 1} 
&\lesssim R^{4} \sum_{1 \leq \abs{k'} \leq \epsilon^{-1}} \abs{Rk'}^{-2}\\
&\sim R^2 \log\left(1 / \epsilon\right).
\end{align*}

%Next, for $d-\frac12 \le \frac{2}{\alpha}$, we consider $II$ over the set of $k\in \mathbb{Z}^{2d+1}$ satisfying the conditions listed in \textbf{sum 2}.
And, summing over the conditions listed in \textbf{sum 2}, we get 
\begin{align*}
II_{sum 2} 
&\lesssim R^{4} \sum_{\abs{k'} \geq \epsilon^{-1}} \abs{Rk'}^{-2} \abs{\epsilon k'}^{-N}\\
&\sim R^2 \log\left(1 / \epsilon\right).
\end{align*}

We conclude that, for $d-\frac12 \le \frac{2}{\alpha}$, $II \lesssim R^2 \log\left(1 / \epsilon\right)$. 
We note for future use that for $d=1$ and $\alpha=2$  we have $II\lesssim R^{3/2}$, as follows from the fact 
that the decay of $\abs{\widehat{\chi_{B_1^{2,A}}} \left(Rk^\prime, 0\right)}$ is $R^{-5/2}$, as stated in Lemma \ref{xyaxis}. 
%% part III

For III, we use Lemma \ref{nonzero} and \eqref{rho-decay}.  We begin by considering the case when $\alpha>2$.  We have 
\begin{align*}
III &= R^{2d+2} \sum_{k'\neq \vec{0}, \, k''\neq 0} \abs{\widehat{\chi_{B_{1}}}(Rk',R^2k'')}\abs{\widehat{\rho}(\epsilon k',\epsilon^2 k'')}\\
&\lesssim R^{d}\sum_{k'\neq \vec{0}, \, k''\neq 0} \abs{k'}^{-d} \abs{k''}^{-1}   \left(1+\abs{(\epsilon k',\epsilon^2 k'')}\right)^{-N}.
\end{align*}

Let us first consider $III$ with conditions listed in \textbf{sum 1}.
\begin{align*}
III_{sum 1} 
%&\lesssim R^{d}\sum_{k'\neq \vec{0}, \, k''\neq 0} \abs{k'}^{-d} \abs{k''}^{-1}  \\
&\le R^{d}\sum_{sum 1} \abs{k'}^{-d} \abs{k''}^{-1}\\     
&\sim R^d \left(1/ \epsilon\right)^d \log\left(1 / \epsilon\right).
\end{align*}

Next, we consider $III$ with conditions listed in \textbf{sum 2}.
\begin{align*}
III_{sum 2} 
%&\lesssim R^{d}\sum_{sum 2} \abs{k'}^{-d} \abs{k''}^{-1} \abs{\epsilon k'}^{-N}\\
&\lesssim R^{d} \sum_{sum 2} \abs{k'}^{-d} \abs{k''}^{-1} \abs{\epsilon k'}^{-N}\\
&\lesssim R^d \left(1/ \epsilon\right)^d \log\left(1 / \epsilon\right).
\end{align*}

Now, we consider $III$ with conditions listed in \textbf{sum 3}.
\begin{align*}
III_{sum 3} &\lesssim R^{d}\sum_{sum 3} \abs{k'}^{-d} \abs{k''}^{-1}   \abs{\epsilon^2 k''}^{-N}\\
&\lesssim R^d \left(1/ \epsilon\right)^d  .
\end{align*}

Finally, we consider $III$ with conditions listed in \textbf{sum 4}.
\begin{align*}
III_{sum 4} & \lesssim R^d \sum_{sum 4} \abs{k'}^{-d} \abs{k''}^{-1}   \abs{\epsilon k'}^{-N/2}\abs{\epsilon^2 k''}^{-N/2}\\
&\lesssim R^d \left(1/ \epsilon\right)^d .
\end{align*}

We conclude that for $\alpha>2$, $III\lesssim  R^d \left(1/ \epsilon\right)^d \log\left(\frac{1}{\epsilon}\right)$.  

In the case $\alpha=2$ we have $III\lesssim R^{d-1/2} \left(1/ \epsilon\right)^{d+1/2}$ which 
follows from the previous calculation using the decay estimate 
$\abs{\widehat{\chi_{B_1}} \left(Rk^\prime, R^2 k''\right)} \lesssim \abs{Rk^\prime}^{-(d-1/2)} \abs{R^2k''}^{-3/2}$ 
stated in Lemma \ref{nonzero}.

We conclude that \eqref{mamasum} is bounded for $ \alpha > 2$ by 
\begin{align*}
&R^{2d-\min\left\{\frac{\alpha}{2}, \, \frac{2d}{\alpha}\right\}} 
+ R^{d+2-\min\left\{d, \, \frac{1}{2}+\frac{2}{\alpha}\right\}} \left(1 / \epsilon \right)^{d-\min\left\{d,\frac{1}{2}+\frac{2}{\alpha}\right\}} 
\log\left(1 / \epsilon \right)\\
& +  R^d \left(1 / \epsilon\right)^d \log\left(1 / \epsilon\right) + O\left(R^{2d+1}\epsilon\right),
\end{align*}
and for $\alpha =2$ the upper bound is $R^{d-\frac12}\left(1 / \epsilon\right)^{d+\frac12}$, as mentioned earlier in the proof, 
and appears without the logarithmic term. 

Next, we find the value of $\epsilon$ which minimizes the sum above. This is accomplished by graphing each of the four quantities 
in the sum above as functions of $\epsilon$ in order to find the value of epsilon which minimizes the maximum over the four quantities.  
For $2\le \alpha \le4$, we set $\epsilon = \frac{1}{R}$ and conclude that \eqref{mamasum} is bounded by $R^{2d}$ when $ \alpha = 2,$ 
and $R^{2d}\log(R)$ when $\alpha > 2$.

For $\alpha > 4$, we set $\epsilon = R^{\delta-1}$, 
where $\delta =\frac{2(\frac12 -\frac{2}{\alpha})}{d+ \frac12 -\frac{2}{\alpha}}$, 
and conclude that \eqref{mamasum} is bounded by $R^{2d+ \delta}\log(R).$  

This completes the proof of Theorem \ref{mama-thm}.

\section{Hyperplane slicing arguments} 
We now turn to the standard method of counting the lattice points in a given body by slicing 
it with hyperplanes, and counting the lattice points in each hyperplane separately, and establish what 
this method yields in the case under consideration. For notational simplicity, we write the arguments 
only for $A=1$, though the same estimates hold for general case with only non-essential modifications.

\subsection{Error bounds in higher dimensions} \label{slicing-d>1}

%Similarly, one can show that 
%$$|B_R| -  \#\{ \mathbb{Z}^{2d+1} \cap B_R \} \
%$$\#\{ \mathbb{Z}^{2d+1} \cap B_R \}  \geq c \sum_{k\in \mathbb{Z}^{2d+1}} \chi_{B_{R-\epsilon}}* \rho_{\epsilon}(k)
%%$$(R   \sum_{k\in \mathbb{Z}^{2d+1}} \chi_{B_{R-\epsilon}}* \rho_{\epsilon}(k) \lesssim \#\{ \mathbb{Z}^{2d+1} \cap B_R \}, $$
%%and it follows that 
%%$$R^{2d+1}\epsilon  \#\{ \mathbb{Z}^{2d+1} \cap B_R \}
Slicing by hyperplane produces the following obvious identity : 
%gives :
\begin{align*}
\# \left(\Z^{2d+1}\cap B_R^\alpha\right) &= \# \left(\set{(k', k'') \in \mathbb{Z}^{2d} \times \mathbb{Z} \,:\, \abs{k'}^\alpha + 
\abs{k''}^{\alpha/2} \le R^\alpha}\right)\\
&=\sum_{\abs{k''}\le R^2} \# \left({\set{k' \in \Z^{2d} \,:\, \abs{k'} \le \left(R^\alpha-\abs{k''}^{\alpha/2}\right)^{1/\alpha}}}\right)\\
&=\sum_{\abs{k''}\le R^2} G_{2d} \left(\left(R^\alpha - \abs{k''}^{\alpha/2}\right)^{1/\alpha}\right),
\end{align*}
where $G_{2d}(T)$ denotes $\# \left({\set{k' \in \Z^{2d} \,:\, \abs{k'} \le T}}\right),$ the standard lattice point counting function in 
Euclidean balls of radius $T$ in $\R^{2d}$. Let us write, with $A_{2d}$ denoting the volume of the unit ball in $\R^{2d}$ :
\begin{align*}
\abs{G_{2d}(T) - A_{2d}T^{2d}} \lesssim T^{\theta_1(2d)} \left(\log(2+T)\right)^{\theta_2(2d)}.
\end{align*}
From the known error estimates on counting lattice points in Euclidean balls in $\R^{2d}$ (see \cite{Kr1} for a full discussion), we have 
$\theta_1(2d) = 2d-2$ for all $d \geq 2$, $\theta_1(2)$ is conjectured to be $\frac12+\epsilon$ for any $\epsilon > 0$ (the conjectured 
error in the Gauss circle problem), $\theta_2(4) = 2/3$ while $\theta_2(2d) = 0$ for all $d \geq 3$. Thus,
$$\abs{\# \left(\Z^{2d+1}\cap B_R^\alpha\right) - \abs{B_R^\alpha}} \lesssim \abs{E_1 - \abs{B_R^\alpha}} + E_2 \, ,$$
where,
\begin{align*}
E_1 &= A_{2d} \sum_{\abs{k''}\le R^2} \left(R^\alpha-\abs{k''}^{\alpha/2}\right)^{2d/\alpha},\\
E_2 &= \sum_{\abs{k''}\le R^2} \left(\left(R^\alpha-\abs{k''}^{\alpha/2}\right)^{1/\alpha}\right)^{\theta_1(2d)} 
\left(\log \left(2+ \left(R^\alpha-\abs{k''}^{\alpha/2}\right)^{1/\alpha}\right)\right)^{\theta_2(2d)}.
\end{align*}
We will first show that the main term given by the volume satisfies  for $d \ge 1$ :

%\begin{align*}
%&\sum_{\abs{m}\le R^2} (R^\alpha-\abs{m}^{\alpha/2})^{2d/\alpha} - \int_{\abs{t}\le R^2} \!\!\!\! (R^\alpha-\abs{t}^{\alpha/2})^{2d/\alpha} \, dt \\
%&= R^{2d}-2\sum_{j=0}^{\left \lfloor{R^2}\right \rfloor-1} \left(\int_j^{j+1}\!\!\!\!\! (R^\alpha-t^{\alpha/2})^{2d/\alpha} \, dt - 
%(R^\alpha-(j+1)^{\alpha/2})^{2d/\alpha}\right)
%- 2 \int_{\left \lfloor{R^2}\right \rfloor}^{R^2} \!\!\!\! (R^\alpha- t^{\alpha/2})^{2d/\alpha} \, dt.
%\end{align*}
%Now, 
%$$\int_j^{j+1}(R^\alpha - t^{\alpha/2})^{2d/\alpha}dt-(R^\alpha - (j+1)^{\alpha/2})^{2d/\alpha}\ge 0$$
%and therefore by estimating the integral by the largest value of the integrand namely 
%$$(R^\alpha - j^{\alpha/2})^{2d/\alpha}$$ we get a telescoping sum, the remainder integral can trivially be estimated by $R^{2d}$ so that $\abs{E_1-\vol(B_R^\alpha)}\le C(\alpha,d)R^{2d}.$

$$\abs{E_1 - \abs{B_R^\alpha}} \lesssim R^{2d- \min\left\{2,\frac{4d}{\alpha}\right\}}.$$
For this, first notice that

\begin{align*}
\abs{B_R^\alpha} 
%&= \int_{\abs{z}^\alpha+\abs{t}^{\alpha/2}\le R^\alpha} \, dz \, dt \\
= \int_{-R^2}^{R^2} \left(\int_{\abs{z}\le \left(R^\alpha-\abs{t}^{\alpha/2}\right)^{1/\alpha}} \, dz\right) \, dt
= A_{2d} \int_{-R^2}^{R^2} (R^\alpha-\abs{t}^{\alpha/2})^{2d/\alpha} \, dt,
\end{align*}
and therefore $E_1-\abs{B_R^\alpha}$ equals (up to a constant multiple) to
\begin{align*}
& \sum_{\abs{k''}\le R^2} \left(R^\alpha-\abs{k''}^{\alpha/2}\right)^{2d/\alpha} - 
\int_{-R^2}^{R^2} \left(R^\alpha-\abs{t}^{\alpha/2}\right)^{2d/\alpha} \, dt\\
=& R^{2d} \sum_{-R^2 \le k'' \le R^2} g\left(k''/R^2\right) - R^{2d} \int_{-R^2}^{R^2} g\left(t/R^2\right) \, dt \, ,
\end{align*}
where $g(t) = \left(1-\abs{t}^{\alpha/2}\right)^{2d/\alpha}$ on $[-1,1]$. 

Now we apply the Euler-MacLaurin formula (as in \S 3 \cite{KN1}, see also \cite{Kr1}, p.20-23) which says that
\begin{align*}
\sum_{-R^2 \le k''\le R^2} g\left(k''/R^2\right) - \int_{-R^2}^{R^2} g\left(t/R^2\right) \, dt 
&= \int_{-R^2}^{R^2} \psi(t) \frac{d}{dt} \left(g\left(t/R^2\right)\right) \, dt \\
&= 2 \int_0^1 \psi\left(R^2t\right) g^{\prime}(t) \, dt \, ,
\end{align*}
with $\psi(t) = t - [t] - \frac12$. Making use of the Fourier series expansion of $\psi$, namely 
$\psi(t) = -\pi^{-1} \sum_{j=1}^{\infty} j^{-1} \sin(2\pi jt)$, the above estimate equals to
\begin{align*}
\frac{2d}{\pi} \sum_{j=1}^{\infty} \frac{1}{j} \int_0^1 t^{\frac{\alpha}{2}-1} 
\left(1-t^{\alpha/2}\right)^{\frac{2d}{\alpha}-1} \sin\left(2\pi j R^2 t\right) \, dt.
\end{align*}
To estimate each integral of the above summand, we perform one integration by parts if $\frac{2d}{\alpha} \geq 1$, otherwise we use the 
estimates for oscillatory integrals with singularities at an endpoint as stated in \S \ref{vdC}, to conclude that
\begin{align*}
\abs{\int_0^1 t^{\frac{\alpha}{2}-1} \left(1-t^{\alpha/2}\right)^{\frac{2d}{\alpha}-1} \sin\left(2\pi j R^2 t\right) \, dt} 
\lesssim\left(R^2 j\right)^{-\min\left\{1,\frac{2d}{\alpha}\right\}}.
\end{align*}
Collecting all these estimates, we get the claimed bound on $\abs{E_1 - \abs{B_R^\alpha}}$.
%$$\abs{E_1 - \abs{B_R^\alpha}} \le C_{\alpha,d} R^{2d- \min\left\{2,\frac{4d}{\alpha}\right\}}.$$

Now we perform the following calculation to estime $E_2$ : When $d\geq2$, we get
\begin{align}
\nonumber E_2 &= \sum_{\abs{k''}\le R^2} \left(\left(R^\alpha-\abs{k''}^{\alpha/2}\right)^{1/\alpha}\right)^{\theta_1(2d)} 
\left(\log \left(2+ \left(R^\alpha-\abs{k''}^{\alpha/2}\right)^{1/\alpha}\right)\right)^{\theta_2(2d)}\\
&\sim R^{2+{\theta_1(2d)}} \left(\log(R)\right)^{\theta_2(2d)}. \label{E_2}
\end{align}

Combining the above estimates we see that
\begin{align}
\abs{\# \left(\Z^{2d+1}\cap B_R^\alpha\right) - \abs{B_R^\alpha}} \lesssim
\begin{cases}
R^4 (\log(R))^{2/3} &; \mbox{ for } d=2\\
R^{2d} &; \mbox{ for } d \geq 3 \,.\\
\end{cases}
\end{align}
%{\bf NOTE : this argument is true for all $\alpha > 0$, not just $\alpha \ge 2$. }\\

\subsection{Error bounds in the first Heisenebrg group ${\sf H}_1$} \label{slicing-d=1}
In the case $d=1$ of the first Heisenebrg group ${\sf H}_1$, we can perform the same calculation as in the previous section, but must 
decide on the value we take for $\theta_1(2)$. Let us note that even if we assume the best possible conjectured error estimate in the 
Gauss circle problem, namely $\theta_1(2)=\frac12 + \epsilon$, then \eqref{E_2} implies
$$\abs{E_2} = O_\epsilon \left(R^{\frac{5}{2} + \epsilon}\right).$$
Therefore,
\begin{align}
\abs{\# \left(\Z^3 \cap B_R^\alpha\right) - \abs{B_R^\alpha}} = O_\epsilon \left(R^{\frac{5}{2} + \epsilon}\right)
\end{align}
for every $\epsilon > 0$. However, as stated in Theorem \ref{mama-thm}, the estimate we obtain for example for $2 \le \alpha \le 4$ is 
at most $R^2 \log(R)$, which is better, see below for further discussion.

%{\bf Lower bound on error for $\alpha = 4$:} We will show here that
%$$\abs{ \#\{ \mathbb{Z}^{2d+1} \cap B_R^4 \}  - |B^4_{R}| } = \Omega(R^{2d-2}).$$
%
%As earlier, assume to the contrary that
%$$\abs{ \#\{ \mathbb{Z}^{2d+1} \cap B_R^4 \}  - |B^4_{R}| } = o(R^{2d-2}).$$
%Then for any $M \in \mathbb{N}$,
%\begin{align*}
%0 &= \# \left\{ \mathbb{Z}^{2d+1} \cap B_{\left(M+\frac12\right)^{1/4}}^4 \right\} - \# \left\{ \mathbb{Z}^{2d+1} \cap B_{M^{1/4}}^4 \right\} \\
%&= \abs{B_{\left(M+\frac12\right)^{1/4}}^4} - \abs{B_{M^{1/4}}^4} + o\left(M^{\frac{d-1}{2}}\right) \\
%&= C_d M^{\frac{d-1}{2}} + o\left(M^{\frac{d-1}{2}}\right),
%\end{align*}
%which is a contradiction.

%%%%%%%%%%%%%%%%%%%%%%%%%%%%%%%%%%%%%%%%%%%%%%%%%%%%%%%%%%%%%%%%%%%%%%%%%%%%%%%%%%%%
%%%%%%%%%%%%%%%%%%%%%%%%%%%%%%%%%%%%%%%%%%%%%%%%%%%%%%%%%%%%%%%%%%%%%%%%%%%%%%%%%%%%
%%%%%%%%%%%%%%%%%%%%%%%%%%%%%%%%%%%%%%%%%%%%%%%%%%%%%%%%%%%%%%%%%%%%%%%%%%%%%%%%%%%%
%%%%%%%%%%%%%%%%%%%%%%%%%%%%%%%%%%%%%%%%%%%%%%%%%%%%%%%%%%%%%%%%%%%%%%%%%%%%%%%%%%%%

\section{Proof of the main theorem} \label{sec:proof-main}
We will now put everything together and prove Theorem \ref{thm:main}. 

\subsection{The best possible estimate for $\alpha =2$ in all dimensions}
The fact that
$$\abs{\# \left(\mathbb{Z}^{2d+1} \cap B_R^{2,A}\right) - \abs{B^{2,A}_{R}}} = O(R^{2d})$$
follows from the corresponding case in Theorem \ref{mama-thm}. The matching lower bound for the error, namely the fact that 
$$\abs{\# \left(\mathbb{Z}^{2d+1} \cap B_R^{2,A}\right) - \abs{B^{2,A}_{R}}} = \Omega(R^{2d})$$
follows from the following elementary argument. Assume to the contrary that
$$\abs{\# \left(\mathbb{Z}^{2d+1} \cap B_R^{2,A}\right) - \abs{B^{2,A}_{R}}} = o(R^{2d}).$$
Then for any $M \in \mathbb{N}$, using the fact that we are counting integer points 
\begin{align*}
0 &= \# \left(\mathbb{Z}^{2d+1} \cap B_{\sqrt{M+\frac{1}{2}}}^{2,A} \right) - \# \left(\mathbb{Z}^{2d+1} \cap B_{\sqrt{M}}^{2,A}\right)\\
&= \abs{B^{2,A}_{\sqrt{M+\frac{1}{2}}}} - \abs{B^{2,A}_{\sqrt{M}}} + o(M^d) \\
&= C \, M^d + o(M^d),
\end{align*}
which is a contradiction.

\subsection{The first Heisenebrg group ${\sf H}_1$}

\subsubsection{The case $  \alpha  > 2 $.}
 Theorem \ref{mama-thm} implies that  
\begin{align*}
\abs{\# \left(\mathbb{Z}^{3} \cap B_R^{\alpha,A}\right) - \abs{B^{\alpha,A}_{R}}} \le 
\begin{cases}
R^{2} \log(R) &; \mbox{ for } 2<\alpha \leq 4\\ 
R^{2+\delta} &; \mbox{ for } \alpha > 4\,,
\end{cases}
\end{align*}
where $\delta =\delta(\alpha)=\frac{2(\frac12 -\frac{2}{\alpha})}{\frac32 -\frac{2}{\alpha}}$. 

Note that the bound on the error term for the Cygan-Kor\'anyi norm ($\alpha=4$) for example is the same (up to a $\log$ factor) as in 
the case of $\alpha=2$ which was best possible. However, we do not have any meaningful $\Omega$-result here, and the upper bound on 
the error term can most likely be improved in this case.

\subsubsection{Slicing and the error bound for large $\alpha$.}

As soon as $\alpha > 4$, we have $\delta(\alpha) > 0$ and so the quality of the error term obtained in Theorem \ref{mama-thm} declines 
as $\alpha \to \infty$. As we saw at the end of the previous section, in the case of ${\sf H}_1$ the slicing argument produces different 
results, depending on the quality of the error estimate in the classical Gauss circle problem. Using the best possible conjectured 
estimate in the latter problem produces an estimate of the error term in 
$\abs{\# \left(\mathbb{Z}^{3} \cap B_R^{\alpha,A}\right) - \abs{B^{\alpha,A}_{R}}}$ 
which is $R^{\frac{5}{2} + \epsilon}$, so that it is inferior to the one stated in Theorem \ref{mama-thm}, namely $R^2 \log(R)$ when 
$2 < \alpha \le 4$. However, for sufficiently large $\alpha$, the error estimate obtained by slicing, namely $R^{2+\theta_1(2)}$ is superior 
to the one mentioned in Theorem \ref{thm:main}, namely $R^{2+\delta(\alpha)}$. Indeed, $\lim_{\alpha \to \infty} \delta(\alpha)=2/3$, 
so for any value of $\theta_1(2)$ which is less than $2/3$, there exists $\alpha_\theta$ such that for $\alpha > \alpha_\theta$ the estimate 
produced by slicing is better than that stated in Theorem \ref{mama-thm}. For example, when we choose the conjectured best possible value 
$\theta_1(2)=1/2$, we have $\alpha_{1/2}=12.$ In particular, for $2\le \alpha \le 12$, the estimate stated in Theorem \ref{mama-thm} is 
superior to the one produced by slicing. 
%For the elementary bound $\theta_1(2)=\frac23$, we have $\alpha_{2/3}=\infty$. 
%However, one should notice that the proof of any of the good error estimates in the Gauss circle problem is highly involved.

\subsection{The case of ${\sf H}_d$, $d \ge 2$}
For the higher-dimensional Heisenberg groups, the method of slicing carried out in \S \ref{slicing-d>1} is surprisingly effective 
and produces the  following bound on the error term 
\begin{align*}
\abs{\# \left(\mathbb{Z}^{2d+1} \cap B_R^{\alpha,A}\right) - \abs{B^{\alpha,A}_{R}}} \le 
\begin{cases}
R^{4} \log(R) &; \mbox{ for } d=2\\ 
R^{2d} &; \mbox{ for } d \ge 3\,.
\end{cases}
\end{align*}
For $2 < \alpha\le 4$ this is the same bound (up to a $\log$ factor)  that was obtained in Theorem \ref{mama-thm}. However, note that in 
the slicing argument we utilized the best possible result for the classical lattice point counting problem in Euclidean balls in dimensions 
$2d\ge 4$. This is a highly non-elementary result, and it is interesting to note that Theorem \ref{mama-thm} produces the bound 
$R^{2d}\log(R)$ for $2 < \alpha \le 4 $ just using van-der-Corput lemma and Poisson summation. 

Another interesting comment is that convexity of $B_R^{\alpha,A}$ is irrelevant to the slicing argument, and in fact the error estimates 
stated in the beginning of this subsection are valid for all $\alpha > 0$, but $B_1^{\alpha,A}$  is convex if and only if $\alpha \ge 2$.   

This concludes the proof of Theorem \ref{thm:main}.

\section{Comparison with some Euclidean lattice point counting results}

In this section we consider the problem of applying an analogue of our method to the problem of lattice point counting in the 
{\it Euclidean dilates} of the compact, convex 3-dimensional Euclidean bodies
$$D_1^{\alpha} = \{(z,t)\in \mathbb{R}^2\times \mathbb{R} \,:\, \widetilde{N}_\alpha ((z,t)) \leq 1\}.$$
where $\widetilde{N}_\alpha ((z,t)) = \left(\abs{z}^\alpha + \abs{t}^\alpha\right)^{1/\alpha}$. Namely we will estimate
$$\abs{\# \left(\mathbb{Z}^{3} \cap D_R^{\alpha} \right) - \abs{D_R^{\alpha}}}$$
where $D_R^{\alpha} = \{(Rz,Rt) \,:\, (z,t)\in D_1^{\alpha}\}$ so that
$\abs{D_R^{\alpha}} = R^3 \abs{D_1^{\alpha}}$.

This problem was studied in great detail in \cite{KN1} (see also the references therein) where the authors performed very fine analysis 
obtaining sharp asymptotic results. Our goal in the present section is to demonstrate that our method of utilizing direct spectral decay 
estimates via Poisson's summation formula also gives the right {\it first order} error estimate (up to a $\log$ factor) for $\alpha\ge 4$, 
equal to the one produced in \cite{KN1}. Our method cannot reproduce the much finer results regarding the secondary main terms obtained 
in \cite{KN1}. The reason we include this analysis here is in order to establish some point of comparison with which to gauge the quality 
of the error estimate stated in Theorem \ref{thm:main}. As noted in the introduction, the authors are not aware of any previous result on 
the lattice point counting problem on the Heisenberg groups which could serve as a basis for such a comparison. 

Our main result in the Euclidean setting is as follows.

\begin{thm}\label{generalmain}
For $\alpha > 2$,
$$\abs{\# \left(\mathbb{Z}^{3} \cap D_R^{\alpha}\right) - \abs{D_R^{\alpha}}} = O\left(\left(R^{\frac{3}{2}} + 
R^{2-\frac{2}{\alpha}}\right)\log(R) \right) \,.$$
\end{thm}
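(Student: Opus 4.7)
The plan is to mirror the proof of Theorem~\ref{mama-thm}, replacing Heisenberg dilates by Euclidean dilates throughout and exploiting the fact that $\widetilde{N}_\alpha$ is an $\ell^\alpha$-type norm on $\mathbb{R}^3$ and hence subadditive under Euclidean addition. Fixing a bump $\rho$ supported in $D_1^\alpha$ with unit integral, I set $\rho_\epsilon(z,t) = \epsilon^{-3}\rho(z/\epsilon, t/\epsilon)$ (now the usual Euclidean dilate). A direct analog of Proposition~\ref{triangle} yields $\chi_{D_{R-\epsilon}^\alpha} * \rho_\epsilon \le \chi_{D_R^\alpha} \le \chi_{D_{R+\epsilon}^\alpha} * \rho_\epsilon$, and Euclidean Poisson summation delivers
\begin{equation*}
\left| \#\left(\mathbb{Z}^3 \cap D_R^\alpha\right) - |D_R^\alpha| \right| \lesssim R^3 \sum_{k \in \mathbb{Z}^3 \setminus \{\vec{0}\}} \left|\widehat{\chi_{D_1^\alpha}}(Rk)\right| \left|\widehat{\rho}(\epsilon k)\right| + O\left(R^2 \epsilon\right),
\end{equation*}
with the $O(R^2\epsilon)$ coming from $|D_{R+\epsilon}^\alpha|-|D_R^\alpha|\sim 3R^2\epsilon$ in Euclidean volume.

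The main technical task is to bound $\widehat{\chi_{D_1^\alpha}}$ in three regimes, by direct analogs of Lemmas~\ref{taxis}, \ref{xyaxis}, and~\ref{nonzero}. The body $D_1^\alpha$ is a Euclidean-convex body of revolution around the $t$-axis, with principal curvatures vanishing to order $\alpha-2$ at both the poles and along the equator. First, for $w=\vec{0}$, the formula $\widehat{\chi_{D_1^\alpha}}(\vec{0},s) = \pi\int_{-1}^1 e^{-2\pi i ts}(1-|t|^\alpha)^{2/\alpha}\,dt$ combined with the Case~II argument of Lemma~\ref{taxis}, applied with $(\beta,\gamma) = (\alpha, 2/\alpha)$, yields $|s|^{-(1+2/\alpha)}$. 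Second, for $s=0$, passing to polar coordinates in $z\in\mathbb{R}^2$ produces $\widehat{\chi_{D_1^\alpha}}(w,0) = 4\pi\int_0^1 J_0(2\pi r|w|)(1-r^\alpha)^{1/\alpha}\,r\,dr$, and the $J_0$-asymptotic plus the endpoint-singularity estimate of \S\ref{vdC} (now with exponent $1/\alpha$ rather than $2/\alpha$ at $r=1$) yields $|w|^{-(3/2+1/\alpha)}$ for $\alpha>2$. Third, for $|w|,|s|\ge 1$, the identity
\begin{equation*}
\widehat{\chi_{D_1^\alpha}}(w,s) = \frac{2}{s}\int_0^1 J_0(2\pi r|w|)\sin\left(2\pi s(1-r^\alpha)^{1/\alpha}\right) r\,dr
\end{equation*}
together with the van der Corput splitting of $[0,1]$ at the critical point of $\phi_{|w|,s}(r) = r \pm (s/|w|)(1-r^\alpha)^{1/\alpha}$ gives the joint decay $|w|^{-1}|s|^{-1}$.

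With these estimates, I decompose the Poisson sum by which of $k',k''\in\mathbb{Z}^2\times\mathbb{Z}$ vanish, and truncate against $\widehat\rho$ beyond $|\epsilon k|\sim 1$. Summing each piece and comparing to integrals as in \S\ref{sec:estimate} gives contributions
\begin{equation*}
\underbrace{R^{2-2/\alpha}}_{k'=\vec{0}} + \underbrace{R^{3/2-1/\alpha}\,\epsilon^{-(1/2-1/\alpha)}}_{k''=0} + \underbrace{R\,\epsilon^{-1}\log(1/\epsilon)}_{k',\,k''\ne 0} + O\left(R^2\epsilon\right);
\end{equation*}
the middle term appears because the spectral exponent $3/2+1/\alpha$ is $<2$ in $\mathbb{Z}^2$. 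Setting $\epsilon = R^{-1/2}$ for $2<\alpha\le 4$ balances the volume and joint terms at $R^{3/2}\log R$, while $\epsilon = R^{-2/\alpha}$ for $\alpha>4$ balances the volume and $k'=\vec{0}$ terms at $R^{2-2/\alpha}$. One checks that with these choices all four contributions are absorbed, producing the stated bound.

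The hard part will be regime~(iii) of the spectral decay. Following Lemma~\ref{nonzero}, after the $J_0$-asymptotic the relevant integral becomes $|w|^{-1/2}\int_0^1 e^{i|w|\phi_{|w|,s}(r)}r^{1/2}\,dr$, and one must split $[0,1]$ at the (unique, when $s>0$) critical point $r_0$ defined by $r_0^{\alpha-1}(1-r_0^\alpha)^{1/\alpha-1}\asymp |w|/s$, proving $|\phi'_{|w|,s}|\gtrsim 1$ on $[0,r_0]$ and $|\phi''_{|w|,s}|\gtrsim 1$ on $[r_0,1)$. The calculations are routine analogs of those in Lemma~\ref{nonzero}, but the monotonicity of $r\mapsto r^{\alpha-1}(1-r^\alpha)^{1/\alpha-1}$ and the sign/size of $(\alpha-1)-r^\alpha$ must be rechecked with exponent $1/\alpha$ in place of $2/\alpha$ throughout.
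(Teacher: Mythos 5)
Your proposal follows the paper's proof essentially verbatim: the same Euclidean-dilated bump function and Poisson summation reduction, the same three spectral decay estimates ($|s|^{-(1+2/\alpha)}$ on the $s$-axis, $|w|^{-(3/2+1/\alpha)}$ on the hyperplane $s=0$, and $|w|^{-1}|s|^{-1}$ jointly, proved by the same endpoint-singularity and van der Corput arguments with exponent $1/\alpha$ in place of $2/\alpha$), the same four-way truncation of the dual sum, and the same four resulting contributions. The only deviation is your choice $\epsilon=R^{-2/\alpha}$ for $\alpha>4$ where the paper takes $\epsilon=R^{2/\alpha-1}$; both choices yield the stated bound, so this is immaterial.
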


In the next section, we state spectral decay estimates analogous to those of \S \ref{Heisenberg-spectral}, which we will use to  prove 
Theorem \ref{generalmain} in \S \ref{proof-alpha-alpha}.

\subsection{Spectral decay estimates}
We begin with a remark that though for simplicity we have stated Theorem \ref{generalmain} for $d=1$, our result extends to higher dimensions, 
namely $\R^{2d+1}$ as well. We restrict our attention only to the case $\alpha >2$. The estimates in this section are stated for $d\geq 1$ 
and $\alpha > 2$. We start with the analogs of Lemmas \ref{taxis}, \ref{xyaxis} and \ref{nonzero}. Each of the three lemmas stated in this 
section are proved using the same techniques as above. Therefore, rather than repeat each of the proofs above, we include the statement of 
the key estimates with some brief comments regarding the proof. 
% use these cases to state results for d=1 
%\begin{align*}
%\abs{\widehat{\chi_{B_1^{\alpha}}}(w,s)} \leq C_{\alpha,d}
%\begin{cases}
%|s|^{-\left(1+\frac{2}{\alpha}\right)} \, , \\
%|w|^{-\frac{3}{2}+\frac{1}{\alpha}} \, , \\
%|w|^{-1} |s|^{-1} \,.
%\end{cases}
%\end{align*}

\begin{lem} \label{generaltaxis}
For $\alpha > 2$ , $\abs{s} \geq 1,$
$$\abs{\widehat{\chi_{D_1^{\alpha}}}(\vec{0},s)} \lesssim \abs{s}^{-\left(1+\min \left\{ \frac{2d}{\alpha}, \, \alpha \right\}\right)}.$$
However, for a positive integer $\alpha \in 2 \N$, we have a better estimate
$$\abs{\widehat{\chi_{D_1^{\alpha}}}(\vec{0},s)} \lesssim \abs{s}^{-\left(1+\frac{2d}{\alpha}\right)}.$$
\end{lem}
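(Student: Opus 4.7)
The plan is to closely mirror the proof of Lemma \ref{taxis}, since the body $D_1^\alpha$ differs from $B_1^{\alpha,A}$ only in that $|t|^{\alpha/2}$ is replaced by $|t|^\alpha$. First I would integrate out the $z$ variable in spherical shells, using that the $z$-slice at height $t$ is a Euclidean ball of radius $(1-|t|^\alpha)^{1/\alpha}$ in $\R^{2d}$, to obtain
$$\widehat{\chi_{D_1^\alpha}}(\vec 0,s) = C_d\int_{-1}^{1}e^{-2\pi i t s}\bigl(1-|t|^\alpha\bigr)^{2d/\alpha}\,dt.$$
This reduces the problem to an oscillatory integral in one variable whose amplitude has vanishing of order $2d/\alpha$ at each endpoint $t=\pm 1$, and whose smoothness at the origin is limited by the exponent $\alpha$ in $|t|^\alpha$ (unless $\alpha$ is an even integer).

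For $\alpha\in 2\N$ the function $|t|^\alpha=t^\alpha$ is a polynomial, so the amplitude is smooth on $(-1,1)$ and only the endpoints can restrict the decay. Following Case I of the proof of Lemma \ref{taxis}, I would write $2d/\alpha=m+\mu$ with $m\in\N$ and $0<\mu\le 1$, split the integral at $t=0$, and integrate by parts $m+1$ times on each of $[0,1]$ and $[-1,0]$. After factoring $1-t^\alpha=(1-t)\sum_{j=0}^{\alpha-1}t^j$ (and the analogous factorization on $[-1,0]$), the residual singular factor $(1-t)^{\mu-1}$ appears at the endpoint, and \eqref{oscillatory-2} gives the bound $|s|^{-(1+2d/\alpha)}$.

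For a general $\alpha>2$, I would follow Case II of the proof of Lemma \ref{taxis} and analyse
$$I_{\alpha,\,2d/\alpha}(s)=\int_0^1\cos(ts)\bigl(1-t^\alpha\bigr)^{2d/\alpha}\,dt$$
(the sine part is analogous). Choose the integer $q\ge 1$ determined by $-1<\min\{\alpha-q,\,2d/\alpha-q\}\le 0$ and integrate by parts $q$ times. This produces two types of residual singularities: a factor of $(1-t)^{2d/\alpha-q}$ concentrated near $t=1$, reflecting the endpoint vanishing of the amplitude; and factors $t^{j\alpha-q}$ concentrated near $t=0$, reflecting the limited smoothness of $|t|^\alpha$ at the origin. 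Splitting the integral at $t=1/2$ and repackaging each piece as $(t-a)^{\lambda-1}g(t)$ with $g$ of bounded variation independent of $s$, the bound \eqref{oscillatory-2} yields decay $|s|^{-(1+\min\{\alpha-q,\,2d/\alpha-q\})}$. Together with the prefactor $|s|^{-q}$ extracted from the $q$ integrations by parts, this gives exactly $|s|^{-(1+\min\{2d/\alpha,\,\alpha\})}$.

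The main technical obstacle is, as in the Heisenberg case, the bookkeeping in the second step: after $q$ integrations by parts one must verify that the resulting non-singular brackets (in particular the ratios $\bigl((1-t^\alpha)/(1-t)\bigr)^{2d/\alpha-q}$ near $t=1$, and the various products of powers of $t$ and $(1-t^\alpha)^{2d/\alpha-q}$ near $t=0$) are bounded and of bounded total variation on $[0,1]$ with constants uniform in $s$, so that \eqref{oscillatory-2} applies. Since $|t|^\alpha$ differs from $|t|^{\alpha/2}$ only by a change of exponent, this bookkeeping is identical to the one carried out in the proof of Lemma \ref{taxis}, with $\alpha/2$ everywhere replaced by $\alpha$, and presents no new analytic difficulty.
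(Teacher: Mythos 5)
Your proposal is correct and follows essentially the same route as the paper, which simply states that the proof is analogous to that of Lemma \ref{taxis} (with the exponent $\alpha/2$ in the amplitude replaced by $\alpha$, i.e.\ taking $\beta=\alpha$, $\gamma=2d/\alpha$ in the integral $I_{\beta,\gamma}$ of Case II, and noting that $|t|^\alpha$ is a polynomial precisely when $\alpha\in 2\N$, which is why the parity condition changes from $4\N$ to $2\N$). Your reduction, case split, and bookkeeping match what the paper intends.
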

For the purpose of proving Theorem \ref{generalmain} where $d=1$, note that $\min \left\{ \frac{2}{\alpha}, \alpha \right\} = \frac{2}{\alpha}$.
The proof of lemma \ref{generaltaxis} is similar to that of Lemma \ref{taxis}, and so we do not repeat it here.

\begin{lem} \label{generalxyaxis}
For $\alpha > 2$, $\abs{w} \geq 1$,
\begin{align*}
\abs{\widehat{\chi_{D_1^{\alpha}}}(w,0)} &\lesssim 
\begin{cases}
\abs{w}^{-2d} &; \mbox{ if } \frac{1}{\alpha} > d-\frac{1}{2}\\
\abs{w}^{-\left(d+\frac{1}{2}+\frac{1}{\alpha} \right)} &; \mbox{ if } \frac{1}{\alpha} \leq d-\frac{1}{2}.
\end{cases}
\end{align*}
\end{lem}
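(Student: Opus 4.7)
The plan is to mirror the argument used in Lemma \ref{xyaxis}, adapting it to the body $D_1^{\alpha}$, whose defining inequality is $|z|^\alpha + |t|^\alpha \le 1$ (symmetric in $z$ and $t$, no mixed exponent). First I would use the $SO(2d)$-radial symmetry in the $z$-variable and the fact that $s=0$, integrating out the $t$-variable over $|t|\le (1-|z|^\alpha)^{1/\alpha}$ to obtain
\begin{equation*}
\widehat{\chi_{D_1^{\alpha}}}(w,0) \;=\; \frac{C_d}{|w|^{d-1}} \int_0^1 J_{d-1}(2\pi r|w|)\,(1-r^\alpha)^{1/\alpha}\, r^d \, dr.
\end{equation*}
Note the power $(1-r^\alpha)^{1/\alpha}$ here, as opposed to $(1-r^\alpha)^{2/\alpha}$ in the Heisenberg case; this single change is what will replace $2/\alpha$ by $1/\alpha$ throughout the endpoint-singularity analysis.

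Next, as in Lemma \ref{xyaxis}, I would split the radial integral at some $\delta \ge |w|^{-1}$ to be chosen. On $[0,\delta]$ the trivial bound $\|J_{d-1}\|_\infty \lesssim 1$ yields an $O(\delta^{d+1})$ contribution. On $[\delta,1]$ I would substitute the standard two-term asymptotic expansion of $J_{d-1}(\xi r)$ (with $\xi = 2\pi|w|$, valid for $\xi r \ge 1$), plus an $O((\xi r)^{-5/2})$ remainder. The remainder is dominated by $|w|^{-5/2}$ (with a harmless $\delta^{-1/2}$ factor when $d=1$). The main terms reduce, after extending back to $[0,1]$ and subtracting the $[0,\delta]$ part (which contributes $\sum_{k=0,1} |w|^{-(k+1/2)} \delta^{d-k+1/2}$), to oscillatory integrals of the form
\begin{equation*}
\sum_{k=0}^{1} c_k \,|w|^{-(k+1/2)} \int_0^1 e^{i\,2\pi|w| r}\, r^{d-k-\tfrac12}\,(1-r^\alpha)^{1/\alpha}\, dr.
\end{equation*}

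For each such integral, I would apply the endpoint-singularity oscillatory estimates recorded in \S\ref{vdC} (or equivalently Case II of the proof of Lemma \ref{taxis}), which give decay at the rate $|w|^{-(1+\min\{d-k-1/2,\,1/\alpha\})}$. The $1/\alpha$ appears because $(1-r^\alpha)^{1/\alpha}$ behaves like $(1-r)^{1/\alpha}$ near $r=1$, producing an endpoint singularity of exponent $1/\alpha$ after one integration by parts (this is exactly where $2/\alpha$ was replaced by $1/\alpha$). Collecting all terms, the integrand is dominated by a finite sum of quantities of the form
\begin{equation*}
\delta^{d+1},\quad |w|^{-5/2}\delta^{-1/2},\quad |w|^{-(k+1/2)}\delta^{d-k+1/2},\quad |w|^{-(k+3/2)-\min\{d-k-1/2,\,1/\alpha\}},
\end{equation*}
for $k=0,1$. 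Choosing $\delta = |w|^{-1}$ balances these contributions, and translating back through the prefactor $|w|^{-(d-1)}$ produces precisely the two bounds stated: $|w|^{-2d}$ when $1/\alpha > d-1/2$ (i.e.\ only $d=1$ and $\alpha<2$, matching the $d=1$ threshold), and $|w|^{-(d+1/2+1/\alpha)}$ otherwise.

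The routine part is the bookkeeping, essentially a direct transcription of Lemma \ref{xyaxis} with $2/\alpha \rightsquigarrow 1/\alpha$; the only small point worth verifying is that the oscillatory estimate in \S\ref{vdC} is robust enough to apply uniformly to $(1-r^\alpha)^{1/\alpha} = (1-r)^{1/\alpha}\bigl(1 + r + \cdots + r^{\alpha-1}\bigr)^{1/\alpha}$ after separating out the singular factor $(1-r)^{1/\alpha}$, using that the remaining factor is smooth on $[0,1]$ with bounded total variation. This is the main place where some care is needed, but it is handled exactly as in Case II of the proof of Lemma \ref{taxis}.
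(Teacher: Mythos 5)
Your proposal is correct and follows essentially the same route as the paper: reduce to the radial Bessel integral with endpoint factor $(1-r^\alpha)^{1/\alpha}$, split at $\delta$, use the two-term Bessel asymptotics plus the endpoint-singularity oscillatory estimates of \S\ref{vdC}, collect the same four families of terms, and set $\delta=\abs{w}^{-1}$. Your closing observation (factor out $(1-r)^{1/\alpha}$ and absorb the smooth bounded remainder) is exactly how the paper's Case~II argument handles the singularity, so there is nothing to add.
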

For the purpose of proving Theorem \ref{generalmain} where $d=1$, we will only need to use the second estimate.

\begin{proof}
The conclusion of the first item agrees with the first item in Lemma \ref{xyaxis}, and the proof follows similarly.
In the case that $\frac{1}{\alpha} \leq d-\frac{1}{2}$, we follow the proof of Lemma \ref{xyaxis} to see that for $k=0,1$ :
$$\abs{\int_0^1 e^{i \xi r} r^{d-k-\frac{1}{2}} (1-r^\alpha)^{1/\alpha} \, dr} \lesssim 
\xi^{-\left(1+ \min \left\{\alpha, \, \frac{1}{\alpha}, \, d-k-\frac{1}{2}\right\}\right)}.$$
Collecting all the bounds, we see that $I(\xi) = \int_0^1 J_{d-1}(\xi r) \left(1-r^\alpha \right)^{2/\alpha} r^d \, dr$ 
is dominated by a finite sum of terms of the form
$$\delta^{d+1}, \, \xi^{-\frac52} \delta^{-\frac{1}{2}}, \, \xi^{-\left(k+\frac{1}{2}\right)} \delta^{d-k+\frac{1}{2}}, \, 
\xi^{-\left(k+\frac{3}{2}\right)} \xi^{-\min \left\{\alpha, \, \frac{1}{\alpha}, \, d-k-\frac{1}{2}\right\}}$$
where $k=0,1.$ Choosing $\delta = \xi^{-1}$, we verify that
%\begin{eqnarray*}
%\absolute{I(\xi)} \leq \xi^{-(d+1)} \, ; \, \textup{ whenever } \gamma > d-\frac{1}{2},
%\end{eqnarray*}
%%
%
\begin{eqnarray*}
\absolute{I(\xi)} \lesssim \xi^{-\left(\frac{3}{2}+\frac{1}{\alpha}\right)} \, ; \, \textup{ whenever } \frac{1}{\alpha} \leq d-\frac{1}{2}.
\end{eqnarray*}
\end{proof}

\begin{lem}\label{generalnonzero}
For $\alpha > 2$ we have for $\abs{w} \geq 1$ and $\abs{s}\geq 1$,
\begin{eqnarray}
\abs{\widehat{\chi_{D_1^{\alpha}}}(w,s)} \lesssim  \abs{w}^{-d} \abs{s}^{-1}.   
\end{eqnarray}
\end{lem}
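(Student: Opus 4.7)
The plan is to follow closely the proof of Lemma \ref{nonzero}, with the exponent $2/\alpha$ replaced by $1/\alpha$ throughout. Mimicking the opening of \S \ref{s=0}, I would first perform the $t$-integration explicitly and pass to polar coordinates in $z$ to rewrite
\begin{equation*}
\widehat{\chi_{D_1^\alpha}}(w,s) = c_d\,\abs{w}^{-(d-1)}\,s^{-1}\int_0^1 J_{d-1}(2\pi r\abs{w})\sin\bigl(2\pi s(1-r^\alpha)^{1/\alpha}\bigr)\, r^d\,dr,
\end{equation*}
so that the claimed bound reduces to showing that the one-dimensional integral on the right is $O(\abs{w}^{-1})$ uniformly for $\abs{w}, \abs{s}\geq 1$.

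I would split the $r$-integration at $\delta:=\abs{w}^{-1}$. On $[0,\delta]$ the crude bounds $\abs{J_{d-1}}, \abs{\sin}\lesssim 1$ yield a contribution $O(\delta^{d+1})=O(\abs{w}^{-d-1})$, which is acceptable. On $[\delta,1]$ I would substitute the leading term of the asymptotic expansion of $J_{d-1}(2\pi r\abs{w})$ already used in the proofs of Lemmas \ref{xyaxis} and \ref{nonzero}; the error term contributes $O(\abs{w}^{-3/2})$ and the main term pulls out a prefactor $\abs{w}^{-1/2}$. After expanding the sine into complex exponentials, the task reduces to proving a bound of $O(\abs{w}^{-1/2})$ for oscillatory integrals
\begin{equation*}
\int_\delta^1 \exp\bigl(2\pi i \abs{w}\phi(r)\bigr)\,r^{d-1/2}\,dr,\qquad \phi(r)=\pm r\pm \tfrac{s}{\abs{w}}(1-r^\alpha)^{1/\alpha}.
\end{equation*}

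The van der Corput analysis of $\phi$ is the crux. When the two signs are chosen so that both summands of $\phi'$ carry the same sign, $\abs{\phi'}\geq 1$ on $[0,1]$ and the first-derivative test alone already gives $O(\abs{w}^{-1})$. For the remaining sign combinations, after possibly flipping the sign of $s$, take $\phi(r)=r+(s/\abs{w})(1-r^\alpha)^{1/\alpha}$ with $s>0$. A direct computation gives
\begin{equation*}
\phi'(r)=1-\tfrac{s}{\abs{w}}\,r^{\alpha-1}(1-r^\alpha)^{1/\alpha-1},\qquad \phi''(r)=-\tfrac{s(\alpha-1)}{\abs{w}}\,r^{\alpha-2}(1-r^\alpha)^{1/\alpha-2},
\end{equation*}
and one checks that $r\mapsto r^{\alpha-1}(1-r^\alpha)^{1/\alpha-1}$ is strictly increasing on $(0,1)$ with range $(0,+\infty)$. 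Hence there is a unique $r_0\in(0,1)$ with $r_0^{\alpha-1}(1-r_0^\alpha)^{1/\alpha-1}=\abs{w}/(2s)$, and $\abs{\phi'}\geq 1/2$ on $[0,r_0]$. On $[r_0,1]$ the algebraic rewriting
\begin{equation*}
\abs{\phi''(r)}=\frac{\alpha-1}{r(1-r^\alpha)}\cdot \frac{s}{\abs{w}}\,r^{\alpha-1}(1-r^\alpha)^{1/\alpha-1}\;\geq\;\tfrac{\alpha-1}{2}
\end{equation*}
holds, using monotonicity past $r_0$ together with $r,\,1-r^\alpha\leq 1$. Since $\phi''$ has constant sign, $\phi'$ is monotone on each subinterval, and van der Corput's first-derivative test on $[0,r_0]$ together with the second-derivative test on $[r_0,1]$ each produce a bound of $O(\abs{w}^{-1/2})$ or better. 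Collecting all contributions with $\delta=\abs{w}^{-1}$ produces the claimed estimate.

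The principal technical obstacle I anticipate is the lower bound on $\abs{\phi''}$ across $[r_0,1]$: because $(1-r^\alpha)^{1/\alpha-2}$ blows up at $r=1$ while $r^{\alpha-2}$ is only bounded, term-by-term estimation loses control. The device that saves the argument is the identity displayed above, which re-expresses $\abs{\phi''}$ in terms of the very monotone quantity used to define $r_0$, thereby converting a singular factor into one under control.
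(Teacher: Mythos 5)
Your proposal is correct and follows essentially the same route as the paper: reduce to the oscillatory integral with phase $\phi(r)=r+\tfrac{s}{\abs{w}}(1-r^\alpha)^{1/\alpha}$, split at the unique $r_0$ with $r_0^{\alpha-1}(1-r_0^\alpha)^{1/\alpha-1}=\abs{w}/(2s)$, and apply the first- and second-derivative van der Corput tests on $[0,r_0]$ and $[r_0,1)$ respectively, obtaining exactly the paper's bounds $\abs{\phi'}\ge \tfrac12$ and $\abs{\phi''}\ge\tfrac{\alpha-1}{2}$. Your rewriting of $\abs{\phi''}$ in terms of the monotone quantity defining $r_0$ is precisely the device the paper uses (it is the analogue of the factor $\frac{(\alpha-1)-r^\alpha}{1-r^\alpha}\ge 1$ in Lemma \ref{nonzero}), so there is nothing to add.
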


\begin{proof} Considering the proof of Lemma \ref{nonzero}, we see that it suffices to estimate
$$\abs{w}^{-1/2} \int_0^1 e^{i\abs{w} r}e^{is (1-r^\alpha)^{1/\alpha}} r^{d-\frac12} \, dr \, .$$
The above integral is same as
$$\abs{w}^{-1/2} \int_0^1 \exp \left(i\abs{w} \phi_{\abs{w}, s}(r)\right) r^{d-\frac12} \, dr \, ,$$
where $\phi_{\abs{w}, s}(r) = r + \frac{s}{\abs{w}} (1-r^\alpha)^{1/\alpha}.$ Now
\begin{eqnarray*}
\phi_{\abs{w}, s}'(r) &=& 1 -  \frac{s}{\abs{w}} r^{\alpha-1} (1-r^\alpha)^{\frac{1}{\alpha}-1} \, , \\
\phi_{\abs{w}, s}''(r) &=& - (\alpha -1) \frac{s}{\abs{w}} r^{\alpha-2} (1-r^\alpha)^{\frac{1}{\alpha}-2} \, .
\end{eqnarray*}
Therefore, $\phi_{\abs{w}, s}'$ is always a monotic function. Clearly, $\phi_{\abs{w}, s}'(r) \geq 1$ for all $r \in [0,1]$
when $s$ is negative. The difficulty arises only when $s$ is positive.

Since $r^{\alpha-1} (1-r^\alpha)^{\frac{1}{\alpha}-1}$ is a strictly increasing function mapping 
$[0,1)$ onto $[0,\infty)$, there exists unique point $r_0 \equiv r_0(\abs{w},s)\in (0,1)$ at which
$r_0^{\alpha-1} (1-r_0^\alpha)^{\frac{1}{\alpha}-1} = \frac{\abs{w}}{2s}$ and
\begin{eqnarray*}
\abs{\phi_{\abs{w}, s}'(r)} \geq &\frac{1}{2} \, \mbox{ for all } r \in [0, r_0],\\
\abs{\phi_{\abs{w}, s}''(r)} \geq &\frac{\alpha - 1}{2} \, \mbox{ for all } r \in [r_0,1).
\end{eqnarray*}
The rest of the proof is analogous to that of Lemma \ref{nonzero}.
\end{proof}

\subsection{Proof of Theorem \ref{generalmain}} \label{proof-alpha-alpha}
We fix a bump function  $\rho: \mathbb{R}^{2}\times \mathbb{R}\rightarrow \mathbb{R}$, which is a smooth non-negative function with 
support contained in the unit ball $D_1^{\alpha}$, such that $\rho(0) > 0$ and $\int_{D_1^{\alpha}} \rho(z,t) \, dz \, dt =1$. 
We then consider the family of functions defined by the Euclidean dilates of $\rho$, namely 
$\rho_\epsilon (z,t) = \epsilon^{-3} \rho(\frac{z}{\epsilon}, \frac{t}{\epsilon})$.
%
%We first state the following preliminary results which we use in the proof of Theorem \ref{generalmain}.
%
%\begin{lem}\label{generaltriangle-2} For each $\alpha \geq 1$ and $\epsilon>0$,
%$$\chi_{D_R^{\alpha}} \leq \chi_{D_{R + \epsilon}^{\alpha}} \ast \rho_\epsilon.$$
%\end{lem}
%\begin{proof}
%If $(z,t) \notin D_R^{\alpha}$, then the inequality holds trivially. Taking $(z,t) \in D_R^{\alpha}$, we have
%\begin{eqnarray*}
%\chi_{D_{R+\epsilon}^{\alpha}} \ast \rho_\epsilon (z,t) = \int_{D_{R + \epsilon}^{\alpha}} \rho_\epsilon (z-w, t-s)\, dw \, ds
%\geq \int_{D_\epsilon^{\alpha}} \rho_\epsilon (w,s) \, dw \, ds
%\end{eqnarray*}
%where the last step holds true because of non-negativity of $\rho_\epsilon$ and the Euclidean subadditivity $\widetilde{N}_\alpha$. 
%In fact, it can easily be verified that $\widetilde{N}_\alpha$ are actual norms on the Euclidean space $\R^3	$. Thus,
%\begin{eqnarray*}
%\chi_{D_{R + \epsilon}^{\alpha}} \ast \rho_\epsilon (z,t) \geq \int_{D_\epsilon^{\alpha}} \rho_\epsilon (w,s) \, dw \, ds
%= \int_{D_1^{\alpha}} \rho (w,s) \, dw \, ds = 1 = \chi_{D_R^{\alpha}}(z,t).
%\end{eqnarray*}
%This completes the proof of Lemma \ref{generaltriangle-2}.
%\end{proof}

In order to prove Theorem \ref{generalmain}, we will follow the proof of the Theorem \ref{mama-thm}. 
Rather than repeat the proof above, we include key estimates where the two arguments differ. 
As in \S \ref{sec:estimate}, the proof reduces to estimating :

\begin{equation}\label{generalmamasum}
R^{3}\sum_{k\in \mathbb{Z}^{3} \setminus \{\vec{0}\}} \abs{\widehat{\chi_{D_1^{\alpha}}}(Rk',Rk'')} 
\abs{\widehat{\rho}(\epsilon k',\epsilon k'')}+O\left(R^2 \epsilon \right).
\end{equation}

In order to utilize the spectral decay estimates of Lemmas \ref{generaltaxis}, \ref{generalxyaxis} 
and \ref{generalnonzero}, we break the sum in $k= (k',k'')\in \mathbb{Z}^{2}\times \mathbb{Z}$ 
into following four pieces :\\
\textbf{sum 1 :} $\abs{k'} \leq \epsilon^{-1}$, $\abs{k''} \leq \epsilon^{-1}$, and $k\neq \vec{0}$ ;\\
\textbf{sum 2 :} $\abs{k'} \geq \epsilon^{-1}$, $\abs{k''} \leq \epsilon^{-1}$, and $k\neq \vec{0}$ ;\\
\textbf{sum 3 :} $\abs{k'} \leq \epsilon^{-1}$, $\abs{k''} \geq \epsilon^{-1}$, and $k\neq \vec{0}$ ;\\
\textbf{sum 4 :} $\abs{k'} \geq \epsilon^{-1}$, $\abs{k''} \geq \epsilon^{-1}$, and $k\neq \vec{0}$ .

\vspace{1em} % adds some space

%We consider \eqref{generalmamasum} only over the set of $k\in \mathbb{Z}^3$ satisfying the conditions 
%listed in \textbf{sum 1}, since this will be the region where the bound obtained dominates the bounds 
%obtained for the other regions, as examined in the proof of Theorem \ref{mama-thm}.
%
%Considering the sum over the set in \textbf{sum 1}, we break the sum further to three regions where 
%we will utilize our three separate spectral decay estimates.  We conclude that \eqref{generalmamasum} 
%is bounded by a constant times : 

Continuing the analysis analogous to that of \S \ref{sec:estimate}, we conclude that \eqref{generalmamasum} is bounded 
by a constant times : 

\begin{equation}\label{generalmainevent} 
R^{2-\frac{2}{\alpha}}  +  R^{\frac32 -\frac{1}{\alpha}} \left(1 / \epsilon\right)^{\frac{1}{2} - \frac{1}{\alpha}} + 
\left(R / \epsilon\right)\log(R) + O\left(R^2\epsilon\right).
\end{equation}
When $2 < \alpha\le 4$, we choose $\epsilon = R^{-1/2}$ to see that the expression in \eqref{generalmainevent} is bounded by
$$R^{3/2}\log(R).$$
When $\alpha > 4$, we choose $\epsilon = R^{\frac{2}{\alpha}-1}$ to see that \eqref{generalmainevent} is bounded by
$$R^{2-\frac{2}{\alpha}}\log(R).$$  This completes the  proof of Theorem \ref{generalmain}.

\end{document}